\documentclass[a4paper]{elsarticle}
\usepackage[utf8]{inputenc}%
\usepackage{amsmath}
\usepackage{amsfonts}
\usepackage{amssymb}
\usepackage[all]{xy}
\usepackage{mathbbol}
\usepackage{graphicx}
\usepackage{hyperref}%
\usepackage{amsthm}
\newtheorem{theorem}{Theorem}[section]

\newtheorem{corollary}[theorem]{Corollary}

\newtheorem{definition}[theorem]{Definition}

\newtheorem{lemma}[theorem]{Lemma}

\newtheorem{proposition}[theorem]{Proposition}

\usepackage{lipsum}
\makeatletter
\def\ps@pprintTitle{%
 \let\@oddhead\@empty
 \let\@evenhead\@empty
 \def\@oddfoot{}%
 \let\@evenfoot\@oddfoot}

\begin{document}
\title{Concentration Inequalities in Riesz Spaces}
\tnotetext[t1]{This document is the results of the research
project funded by the National Science Foundation.}
\tnotetext[t2]{The authors are members of the GOSAEF research group.}
\author[1]{Mohamed Amine BEN AMOR\corref{cor1}%
\fnref{fn1}}
\ead{mohamedamine.benamor@ipest.rnu.tn}

\author[2]{Amal OMRANI\fnref{fn2}}
\ead{amal.omrani@ipeiem.rnu.tn}

\fntext[fn1]{The author is member of the GOSAEF research group. \url{www.gosaef.com}}
\fntext[fn2]{The author is member of the GOSAEF research group. \url{www.gosaef.com}}

\address[1]{Research Laboratory of Algebra, Topology, Arithmetic, and Order,  and
GOSAEF, Department of Mathematics, Faculty of Mathematical,
Physical and Natural Sciences of Tunis, Tunis-El Manar University,
2092-El Manar, Tunisia.}
\address[2]{Research Laboratory of Algebra, Topology, Arithmetic, and Order,  and
GOSAEF, Department of Mathematics, Faculty of Mathematical,
Physical and Natural Sciences of Tunis, Tunis-El Manar University,
2092-El Manar, Tunisia.}

\begin{abstract}
In this work, we will generalize the moment generating function to Riesz spaces. We will derive some of its properties and use it to prove concentration inequalities on Riesz spaces.
\end{abstract}

\begin{keyword}
Moment generating function \sep Riesz spaces \sep concentration inequalities
\end{keyword}

\maketitle

\section{Introduction}

Various topics in stochastic processes have been considered in the abstract setting of Riesz spaces.
Labuschagne and Watson in \cite{kuo2005conditional} define conditional expectation operators as positive order-continuous projections mapping weak order units to weak order units and having Dedekind complete range. With this definition, conditional expectation operators are shown to commute with certain band projections. The averaging properties of these operators are then shown, which leads to the extension of the domains of such operators to what is called their maximal domain. This definition of conditional expectation was used to generalise martingales, submartingales, stopping times and optional stopping theorems to vector lattices. In \cite{kuo2005zero} the concept of independence was generalised, as well as the Borel-Cantelli Lemma and Kolmogorov’s Zero-One Law. 
Kuo, Vardy and Watson generalised Markov processes \cite{vardy2012markov} and Bernoulli processes, with a related law of large numbers, the Bienaymé inequality, and Poisson’s theorem \cite{kuo2016bernoulli} to Riesz spaces.

By contrast Concentration Inequalities which lead to statistical applications have received very little attention. In this work, we prove some of the concentration inequalities in Riesz spaces: Chernoff inequality, Bennett's inequality and Hoeffding inequality. We define among other the moment generating function for bounded elements.
%Thanks to  the fundamental work of  Kuo, WC and Labuschagne, CCCA and Watson, BA  \cite{kuo2005conditional}, the study of Stochastic process in the framework of Riesz spaces  has been growing at a very fast pace.
%
%Recently Watson BA and \textit{al.} focused on Chebytchev's inequality in  \cite{kuo2016bernoulli}. Chebytchev's inequality is certainly one of the most important concentration inequalities. Concentration inequalities describe how a random variable deviate around its mean.
%
%In this work, like the Watson's ones we study some of these inequalities and extend them to the framework of the Riesz Spaces. As  in those spaces, the expectation is not very convenient, these extension will be use the conditional expectation wich make a generalization even in the classical real valued radom variables.
%

The next section will be devoted to some preliminaries on Riesz spaces, representation theorem and representation theorem on Riesz spaces. Later, we will construct the exponential function on Riesz space and derive some of its properties. The exponential function will play a key role on our studies. We will generalize the well known Moment generating function to the framework of measure free Riesz spaces and prove on it its most relevant properties. The three final sections, will be devoted to the concentration inequalities on Riesz spaces.

\section{Preliminaries}
This section is devoted to some preliminaries on representation theorem in Riesz Spaces and Conditional expectation in Dedekind complete Riesz Spaces. However,  the reader is expected to be familiar with the basic theory of Riesz Spaces. We refer to the classical monographs \cite{aliprantis2006positive}, \cite{luxemburg1971ac} and \cite{zaanen2012introduction} for undefined terminology.

From now, we will assume that $E$ is a Dedekind complete Riesz space with $u$ as a weak order unit. $E_u$ will denote the order ideal generated by $u$. (and then $u$ becomes a strong unit in $E_u$).
We recall that $E_u$ can be equipped by an $f$-algebra multiplication in such a manner that $u$ becomes an algebra unit. Yoshida proved in his early work \cite{yosida194270} the following representation theorem which will be useful in the sequel. (For more details about the representation theorem see \cite{groenewegen2016spaces}, \cite{luxemburg1971ac} and \cite{yosida194270}  ).

\begin{theorem}\label{yos}
Let $E_u$ be a Dedekind complete Riesz Space with a strong unit $u$. Then there are a compact space $X$ and a Riesz isomorphism $\varphi:\ E_u \to C(X)
$ such that\[\begin{array}{lcl}
\varphi(u)&=&\mathbb{1}
\end{array}
\]
\end{theorem}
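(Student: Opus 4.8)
The plan is to realize $X$ as the space of Riesz homomorphisms (equivalently, the maximal ideal space) of the $f$-algebra $E_u$, and to let $\varphi$ be the Gelfand-type evaluation map. First I would fix the $f$-algebra structure on $E_u$ in which $u$ is the algebra unit, and define $X$ to be the set of all nonzero multiplicative linear functionals $\omega\colon E_u \to \mathbb{R}$ that are also Riesz homomorphisms, endowed with the weak-$*$ topology inherited from the algebraic dual. Since $u$ is a strong unit, every such $\omega$ satisfies $\omega(u)=1$ and $|\omega(f)|\le \lVert f\rVert_u$ where $\lVert f\rVert_u = \inf\{\lambda>0 : |f|\le \lambda u\}$; hence $X$ sits inside a product of compact intervals and is weak-$*$ closed, so by Tychonoff $X$ is compact. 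Then I would define $\varphi\colon E_u \to C(X)$ by $\varphi(f)(\omega) = \omega(f)$; continuity of each $\varphi(f)$ is immediate from the definition of the topology, and $\varphi$ is clearly linear, multiplicative, and lattice-preserving (the latter because each $\omega$ is a Riesz homomorphism, so $\omega(f\vee g)=\omega(f)\vee\omega(g)$), and $\varphi(u)=\mathbb{1}$ since $\omega(u)=1$ for all $\omega$.

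The substantive steps are then: (i) \emph{injectivity} of $\varphi$, i.e. that $X$ separates the points of $E_u$; (ii) \emph{surjectivity}, i.e. that the image is all of $C(X)$; and (iii) that $\varphi$ is a Riesz \emph{isomorphism} onto its image, meaning its inverse is also order-preserving. For (i), given $0\neq f\in E_u$ one must produce an $\omega\in X$ with $\omega(f)\neq 0$; the standard route is to pass to the quotient by a maximal $\ell$-ideal containing a suitable band complement, using Zorn's lemma to extend, and invoking that the quotient of an Archimedean $f$-algebra with unit by a maximal $\ell$-ideal is (Riesz and algebra isomorphic to) $\mathbb{R}$ — this last fact uses Dedekind completeness/uniform completeness to rule out the field being a larger totally ordered field. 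For (iii), the key point is that $\varphi$ is an \emph{isometry} for $\lVert\cdot\rVert_u$ and the sup norm: $\lVert\varphi(f)\rVert_\infty = \lVert f\rVert_u$, which follows once one knows $\lVert f\rVert_u = \sup_{\omega}|\omega(f)|$ (a consequence of point separation plus the spectral-radius identity in the uniformly complete unital $f$-algebra $E_u$); an isometric Riesz homomorphism between the relevant normed lattices automatically has order-preserving inverse. Finally (ii) is the Stone–Weierstrass theorem: $\varphi(E_u)$ is a subalgebra of $C(X)$ containing the constants and separating points, hence dense, and being complete (by the isometry and uniform completeness of $E_u$, which follows from Dedekind completeness) it is closed, so equals $C(X)$.

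I would expect the main obstacle to be step (i), establishing that $X$ is rich enough to separate points, together with the algebraic fact feeding into it — that the residue field $E_u/M$ at a maximal $\ell$-ideal $M$ is exactly $\mathbb{R}$ rather than some nonarchimedean ordered field. This is where Dedekind completeness genuinely enters (beyond merely guaranteeing uniform completeness), and it is the technical heart on which both injectivity and the norm identity $\lVert f\rVert_u=\sup_\omega|\omega(f)|$ rest. Once that is in hand, the compactness of $X$, the algebraic/lattice homomorphism properties of $\varphi$, and the Stone–Weierstrass surjectivity argument are all routine. In practice, rather than reproving all of this, I would cite the classical accounts in \cite{luxemburg1971ac}, \cite{yosida194270}, and \cite{groenewegen2016spaces}, and present only the construction of $X$ and $\varphi$ together with the verification that $\varphi(u)=\mathbb{1}$, since that is the normalization the statement singles out.
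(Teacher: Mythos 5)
The paper itself gives no proof of this theorem: it is quoted verbatim from the classical literature, with the reader referred to \cite{yosida194270}, \cite{luxemburg1971ac} and \cite{groenewegen2016spaces}. Your sketch is the standard Gelfand-type (Yosida--Kakutani) argument that those references carry out --- realize $X$ as the weak-$*$ compact set of normalized Riesz (equivalently, unital multiplicative) homomorphisms, let $\varphi$ be evaluation, get injectivity from separation of points via maximal $\ell$-ideals with residue field $\mathbb{R}$, get the norm identity $\lVert \varphi(f)\rVert_\infty=\lVert f\rVert_u$, and finish surjectivity by Stone--Weierstrass plus completeness of $(E_u,\lVert\cdot\rVert_u)$ --- so in substance you are reconstructing exactly the proof the paper delegates to its citations, and your plan is sound. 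Two refinements are worth noting. First, the attribution of where Dedekind completeness ``genuinely enters'' is slightly off: the fact that $E_u/M$ is Riesz and algebra isomorphic to $\mathbb{R}$ for a maximal $\ell$-ideal $M$ is a purely algebraic consequence of simplicity of the quotient (a Riesz space with no nontrivial ideals is one-dimensional) and needs no completeness; what injectivity really uses is the Archimedean property, and what surjectivity uses is uniform (hence norm) completeness of $E_u$ --- Dedekind completeness implies both, which is why the hypothesis suffices. Second, Dedekind completeness actually yields more than the statement records, namely that $X$ can be taken extremally disconnected (Stonean); the paper never uses this, so your weaker conclusion is enough for everything that follows. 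With those caveats, presenting the construction of $X$ and $\varphi$, checking $\varphi(u)=\varphi(u)^2$ equals the constant one function, and citing \cite{luxemburg1971ac} or \cite{yosida194270} for the separation and completeness steps, as you propose, is entirely consistent with how the paper treats this result.
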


As a corollary of the Yoshida theorem we can endow $E_u$ with an $f$-algebra multiplication such that $u$ becomes an  algebra unit and  $\varphi$  an algebra homomorphism. See \cite{groenewegen2016spaces} and \cite{luxemburg1971ac} for more details.

$\mathcal{C}(X)$ equipped with $\|.\|_\infty$ is a Banach spaces. If we equip $E_u$ with the Jauge norm : $\|.\|_u$ defined as: \[\|f\|_u=\inf{\lbrace\beta \in \mathbb{R},\text{ such that }|f|\leq \beta u\rbrace},\] then $\varphi$ is also an isometry.

Let now recall some facts about the conditional expectation and independence in Riesz spaces. For more details we can refer to \cite{kuo2004discrete} and \cite{kuo2005conditional}.

Let $E$ be Dedekind complete Riesz space with $u$ as weak order unit. We call $P$ and $Q$, \textit{$T$-independent} band projections in $E$ whenever \[TPQu=TPu\ TQu\] holds.

We say that two Riesz subspaces $E_1$ and $E_2$ of $E$ are $T$-conditionally independent if all band projections $P_i,$  such that $P_i(u) \in E_i$ for $i=1,2$ are $T$-conditionally independent .

It should be noted that $T$-conditional independence of the band Projection $P$ and $Q$ is equivalent to $T$-conditional independence of the closed Riesz subspace $<Pu,R(T)>$ and $<Qu,R(T)>$  generated by $Pu$ and $R(T)$ and by $Qu$ and $ R(T)$ respectively.

The concept of $T$-conditional independence can be extended to a family $(E_\lambda)_{\lambda \in \Lambda}$ of closed Dedekind complete Riesz spaces of $E$ with $R(T)\subset E_\lambda$ for all $\lambda \in \Lambda$. We say that the family is $T$-conditionally independent, if for each pair of disjoint subsets $\Lambda_1$ and $\Lambda_2$ of $\Lambda$, we have that  $E_{\Lambda_1}$ and $E_{\Lambda_2}$ are $T$-conditional independent, where $E_{\Lambda_j} := <\cup_{\lambda \in \Lambda_j} E_\lambda>$ for $j=1,2$.

Finally, we say that a sequence $(f_n)$ in $E$ is $T$-conditionally independent if the family of closed Riesz spaces $<{f_n}\cup R(T)>,\ n\in \mathbb{N}$, is $T$-conditionally independent.

For the convenience of the reader we repeat the next lemma and definition from \cite{kuo2016bernoulli} without proofs, thus making our exposition self-contained.

\begin{lemma}\label{indop}
Let $E$ be a $T$-universally complete Riesz space with weak order unit $u=Tu$ where $T$ is a strictly positive conditional expectation operator on $E$. Let $f$ and $g \in E_u$. If $f$ and $g$ are $T$-conditionally complete independent then \[Tfg = TfTg = TgTf\] holds.
\end{lemma}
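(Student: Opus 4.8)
The plan is to reduce the assertion to its defining instance for band projections and then lift it to arbitrary $f,g\in E_u$ by Freudenthal spectral approximation, using the $\|\cdot\|_u$-continuity of $T$ on $E_u$ and the submultiplicativity of the $f$-algebra norm $\|\cdot\|_u$. Note first that $E_u$, being an $f$-algebra with unit $u$ (Theorem~\ref{yos}), is closed under multiplication, so $fg\in E_u$ and $Tfg$ is meaningful; and since that multiplication is commutative, the equality $TfTg=TgTf$ is automatic, so only $Tfg=TfTg$ has to be proved.

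First I would unwind the definition of $T$-conditional independence: the closed Riesz subspaces $\langle f\cup R(T)\rangle$ and $\langle g\cup R(T)\rangle$ are $T$-conditionally independent, so $T(PQu)=(TPu)(TQu)$ for all band projections $P,Q$ with $Pu\in\langle f\cup R(T)\rangle$ and $Qu\in\langle g\cup R(T)\rangle$. By bilinearity of the multiplication, linearity of $T$, and the fact that $u=Tu\in R(T)$ (so that affine shifts $f\mapsto f+\beta u$, $g\mapsto g+\beta u$ cause no loss), it suffices to prove $Tfg=TfTg$ for $0\le f\le u$ and $0\le g\le u$.

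Next I would invoke the Freudenthal spectral theorem in the Dedekind complete space $E_u$ with strong unit $u$ (see \cite{zaanen2012introduction}): there are $u$-step elements $s_n=\sum_i\alpha_i^{(n)}\big(P_i^{(n)}-P_{i-1}^{(n)}\big)u$ and $t_n=\sum_j\beta_j^{(n)}\big(Q_j^{(n)}-Q_{j-1}^{(n)}\big)u$ with $\|s_n-f\|_u\to 0$ and $\|t_n-g\|_u\to 0$, whose spectral band projections satisfy $P_i^{(n)}u\in\langle f\cup R(T)\rangle$ and $Q_j^{(n)}u\in\langle g\cup R(T)\rangle$: indeed each such element is the order limit of the increasing sequence $\big(k(f-\lambda u)^+\big)\wedge u$, which lies in the Riesz subspace generated by $f$ and $u\in R(T)$, and hence in its (order) closure. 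A difference of comparable band projections is again a band projection, so the $T$-conditional independence applies to each pair $P_i^{(n)}-P_{i-1}^{(n)}$, $Q_j^{(n)}-Q_{j-1}^{(n)}$, and expanding $s_nt_n$ bilinearly yields $T(s_nt_n)=(Ts_n)(Tt_n)$ for every $n$.

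Finally I would pass to the limit in $(E_u,\|\cdot\|_u)$. Positivity of $T$ together with $Tu=u$ gives $\|Th\|_u\le\|h\|_u$, hence $Ts_n\to Tf$, $Tt_n\to Tg$ and $T(s_nt_n)\to T(fg)$; submultiplicativity $\|ab\|_u\le\|a\|_u\|b\|_u$ together with the boundedness of $\|s_n\|_u$ and $\|t_n\|_u$ gives $s_nt_n\to fg$ and $(Ts_n)(Tt_n)\to(Tf)(Tg)$. Comparing the two limits of the single sequence $T(s_nt_n)=(Ts_n)(Tt_n)$ gives $Tfg=(Tf)(Tg)$, as required. I expect the only genuine obstacle to be the bookkeeping in the spectral step — verifying that the spectral band projections of $f$ and of $g$ send $u$ into $\langle f\cup R(T)\rangle$ and $\langle g\cup R(T)\rangle$ respectively; this is where the precise (order-closed) meaning of the generated subspace in the framework of \cite{kuo2016bernoulli} is used, and if one only has the uniformly closed subspace at hand one replaces the $u$-step approximants by continuous piecewise-linear functions of $f$ and $g$ and runs the same argument. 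Everything else is routine continuity in the $f$-algebra $(E_u,\|\cdot\|_u)$.
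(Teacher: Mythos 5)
Your argument is essentially correct, but it follows a genuinely different route from the one behind Lemma~\ref{indop}: the paper itself imports this lemma from \cite{kuo2016bernoulli} without proof, and the proof given there (mirrored in this paper's proof of Lemma~\ref{puissance}) goes through the Radon--Nikodym/And\^o--Douglas theorem, producing commuting conditional expectations $T_f$, $T_g$ with ranges $\langle f, R(T)\rangle$, $\langle g, R(T)\rangle$ and $T=T_fT_g=T_gT_f$, and then concludes via the averaging property. You instead work directly from the band-projection definition of $T$-conditional independence: reduce to $0\le f,g\le u$ by affine shifts (legitimate, since $u=Tu\in R(T)$ so the generated closed Riesz subspaces are unchanged and the shift terms cancel on both sides), approximate by Freudenthal $u$-step elements whose spectral components $P_\lambda u=\sup_k\bigl(k(f-\lambda u)^+\wedge u\bigr)$ lie in the order-closed subspace $\langle f\cup R(T)\rangle$, apply independence pairwise to the (differences of) spectral projections, and pass to the limit using $\|Th\|_u\le\|h\|_u$ and submultiplicativity of $\|\cdot\|_u$. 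This buys a more elementary, self-contained argument that avoids the deep Radon--Nikodym machinery; what it costs is the bookkeeping you already flag (order-closedness of the generated subspaces, and the unstated but standard fact that components of $u$ multiply as $Pu\cdot Qu=PQu$ in the $f$-algebra $E_u$, which is what makes the bilinear expansion $T(s_nt_n)=(Ts_n)(Tt_n)$ legitimate), and it is less flexible than the commuting-expectations method, which extends with no extra work to mixed powers $Tf^ng^m=Tf^nTg^m$ as needed in Lemma~\ref{puissance} and to the product formula for moment generating functions in Theorem~\ref{ext}.
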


\begin{definition}\label{bernou}
Let $E$ be a Dedekind Riesz space with weak order unit, $u$, and conditional expectation operator $T$ with $Tu = u$. Let $(P_k )_{k\in \mathbb{N}}$ be a sequence of $T$-conditionally independent band projections. We say that$(P_k )_{k\in \mathbb{N}}$ is a Bernoulli process if
\[TP_u = f \]for all $k\in \mathbb{N}$ for some fixed $f\in E_u$.
\end{definition}

\section{Exponential function in Riesz Spaces}

\begin{center}
Throughout this paper we denote  by $x^0$ the unit element $u$ in $E_u$.
\end{center}

There are several methods to define the exponential function on a Dedekind complete riesz space. We can cite among others the function calculus method (see \cite{buskes1991functional} or \cite{grobler1988functional}). We choose to use the Yoshida theorem \ref{yos}. 

\begin{theorem}\label{expo}
Let $E_u$ be a Dedekind complete Riesz Space with a strong unit $u$. Then the power serie \[S_n(x)=\sum_{k=0}^n \frac{1}{k!}x^k\] o-converges for every $x$ in $E_u$. Its limit will be denoted $\exp(x)$.
\end{theorem}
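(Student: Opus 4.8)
The plan is to transport the series to the representation space $C(X)$ furnished by Theorem~\ref{yos}, where it becomes the ordinary exponential series, and then to pull the convergence back to $E_u$. Let $\varphi\colon E_u\to C(X)$ be the Yoshida isomorphism; as noted after Theorem~\ref{yos}, once $E_u$ is equipped with its $f$-algebra multiplication, $\varphi$ is simultaneously a Riesz isomorphism, an algebra homomorphism with $\varphi(u)=\mathbb{1}$, and an isometry between $(E_u,\|\cdot\|_u)$ and $(C(X),\|\cdot\|_\infty)$. Fix $x\in E_u$ and set $g:=\varphi(x)\in C(X)$, so that $\varphi(S_n(x))=\sum_{k=0}^n\frac1{k!}g^k$ is the $n$-th partial sum of the classical exponential series of the bounded continuous function $g$; since $\|g\|_\infty=\|x\|_u<\infty$, this converges uniformly on $X$ to $e^{g}\in C(X)$.

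The estimate doing the real work is obtained in $E_u$ itself. From $|x|\le\|x\|_u\,u$ and the positivity of the $f$-algebra multiplication one gets, by induction on $k$, that $|x^k|=|x|^k\le\|x\|_u^k\,u$ (recall $u^k=u$). Hence, for $m>n$,
\[
\bigl|S_m(x)-S_n(x)\bigr|
=\Bigl|\sum_{k=n+1}^{m}\tfrac1{k!}x^k\Bigr|
\le\sum_{k=n+1}^{m}\tfrac1{k!}\,|x|^k
\le\Bigl(\sum_{k=n+1}^{m}\tfrac{\|x\|_u^k}{k!}\Bigr)u
\le r_n\,u ,
\]
where $r_n:=\sum_{k>n}\|x\|_u^k/k!\to 0$. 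Thus $(S_n(x))_n$ is relatively uniformly Cauchy with regulator $u$, and in particular order bounded, since $|S_n(x)|\le e^{\|x\|_u}u$ for every $n$.

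It remains to conclude. Because $E_u$ is Dedekind complete, the relatively uniformly Cauchy (hence order-Cauchy) sequence $(S_n(x))_n$ has an order limit $s\in E_u$; by the isometry property $s=\varphi^{-1}(e^{g})$. Fixing $n$ and letting $m\to\infty$ in the displayed inequality — order limits preserve order inequalities — gives $|s-S_n(x)|\le r_n\,u$ for all $n$. Finally, since $E_u$ is Archimedean and $r_n\downarrow 0$ in $\mathbb{R}$, the sequence $r_n\,u$ decreases to $0$ in $E_u$, so $|s-S_n(x)|$ is dominated by a sequence decreasing to $0$; that is, $S_n(x)$ o-converges to $s$. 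Putting $\exp(x):=s$ completes the argument. The one delicate point is the passage from relative uniform (equivalently, $\|\cdot\|_u$-norm) convergence to genuine order convergence: this is exactly where Dedekind/uniform completeness and the Archimedean property enter, and I would make the dominating sequence $r_n\,u\downarrow 0$ explicit rather than leave it implicit.
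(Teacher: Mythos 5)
Your argument is correct, but it reaches the conclusion by a different route than the paper. The paper's proof works entirely ``by representation'': it transports the partial sums through the Yoshida isomorphism $\varphi$, invokes the classical uniform convergence of $\sum \hat{x}^k/k!$ in $C(X)$, and pulls the resulting estimate $\bigl|S_n(x)-\exp(x)\bigr|\le\varepsilon u$ back to $E_u$ to conclude ru- and hence o-convergence; the limit is by construction $\varphi^{-1}(\exp(\varphi(x)))$, which is the formula the rest of the paper leans on. You instead do the real work intrinsically in $E_u$: the bound $|x^k|=|x|^k\le\|x\|_u^k\,u$ (valid in the $f$-algebra structure on $E_u$) gives a relatively uniform Cauchy estimate with regulator $u$, Dedekind completeness (hence uniform completeness) of $E_u$ supplies the limit $s$, and passing to the limit in the Cauchy estimate gives $|s-S_n(x)|\le r_n u$ with $r_n u\downarrow 0$ by the Archimedean property, hence o-convergence. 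Your version buys a proof of the convergence statement that does not need the representation theorem at all (only the $f$-algebra multiplication and completeness), and it makes explicit the dominating sequence $r_n u\downarrow 0$ that the paper leaves implicit in ``ru-convergent and hence o-convergent''; the paper's version buys, with no extra work, the identification $\exp=\varphi^{-1}\circ\exp\circ\varphi$, which you recover only as a side remark via the isometry but which is what subsequent results (Proposition~\ref{propexp}, Proposition~\ref{band}) actually use. Both are sound; if you keep your route, state the identification of the limit explicitly, since the later sections depend on it.
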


\begin{proof}
According to The Yoshida representation Theorem \ref{yos}, there are a compact space $X$ and an algebra and a Riesz isomorphism $\varphi: \ E_u \to C(X)$ such that $\varphi(u)=1$. It follows that for every natural number $n\geq 0$, we have: 
\[
\begin{aligned}
\varphi(S_n(x))&=\sum_{k=0}^n \frac{1}{k!}\varphi(x^k)\\
 &= \sum_{k=0}^n \frac{1}{k!}\varphi(x)^k\\
 &= \sum_{k=0}^n \frac{1}{k!}\hat{x}^k\\
 &= S_n(\hat{x})
\end{aligned}
\]
where $\hat{x}=\varphi(x)$.

Since $-\lambda u\leq x\leq\lambda u$, it follows that $\|\hat{x}\| \leq \lambda$. Consequently, $(S_n(\hat{x}))_n$ is uniformly convergent to $exp(\hat{x})$, i.e.  for all $\varepsilon > 0$ , there exists $N_0\in\mathbb{N}$ such that for all $n\geq N_0$ : 
\[ \|\sum_{k=0}^n \frac{1}{k!}\hat{x}^k - exp(\hat{x})\| \leq \varepsilon. \]
and so for all $\displaystyle t\in X : \left|\sum_{k=0}^n \frac{1}{k!}\hat{x}^k(t) - \exp(\hat{x})(t)\right| \leq \varepsilon \mathbb{1}(t) $.

By Theorem 1 in \cite{yosida194270} we get \[\left|\sum_{k=0}^n \frac{1}{k!} x^k - \exp(x)\right| \leq \varepsilon u.\]
so that $(S_n(x))_n$ is  ru-convergent to $\exp(x)$ and hence o-convergent.
\end{proof}

The map $\exp:\ E_u \to E_u$ that maps every element $x$ in $E_u$ to $\exp(x)$ is well defined and one to one, since $\varphi$ and $\exp$ are one to one.
$$
\xymatrix{
  E_u \ar[r]^{\varphi}   & \mathcal{C}(X) \ar[d]^{\exp} \\
   & \mathcal{C}(X) \ar[lu]^{\varphi^{-1}}
   }
$$
The $\exp$ function appears to be in $E_u$ as \[\exp=\varphi^{-1} \circ \exp\circ \varphi\]
  In the next proposition we give a series of properties that the $\exp$ function verifies in $E_u$.
\begin{proposition}\label{propexp}
Let $E_u$ be a Dedekind complete Riesz Space with a strong unit $u$. The following statements hold:
\begin{enumerate}
\item for all $x$ and $y$ in $E_u$, $\exp(x+y)= \exp(x)\exp(y)$.
\item for all $x$ in $E_u$, $exp(x)\geq 0$.
\item for all $x$ in $E_u$, $exp(x)$ is invertible.
\end{enumerate}
\end{proposition}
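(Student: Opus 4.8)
The strategy is to transport everything through the Yoshida isomorphism $\varphi\colon E_u\to \mathcal C(X)$ and exploit the fact, established in the proof of Theorem~\ref{expo}, that $\varphi(\exp x)=\exp(\widehat x)$ where $\widehat x=\varphi(x)$ and $\exp$ on the right is the ordinary exponential of a continuous function. Since $\varphi$ is simultaneously an algebra isomorphism and a Riesz isomorphism, each of the three assertions is equivalent to the corresponding elementary fact about real-valued continuous functions on $X$, and pulling back by $\varphi^{-1}$ finishes the job.

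For (1), I would compute $\varphi(\exp(x+y)) = \exp(\widehat x+\widehat y)$ in $\mathcal C(X)$; pointwise on $X$ this equals $\exp(\widehat x)\exp(\widehat y)$ by the scalar identity $e^{s+t}=e^s e^t$, which is $\varphi(\exp x)\,\varphi(\exp y)=\varphi(\exp(x)\exp(y))$ since $\varphi$ is multiplicative. Injectivity of $\varphi$ then gives $\exp(x+y)=\exp(x)\exp(y)$ in $E_u$. For (2), note that $\varphi(\exp x)=\exp(\widehat x)$ is a pointwise-positive function in $\mathcal C(X)$, i.e. $\exp(\widehat x)\ge \mathbb 0$; since $\varphi$ is a Riesz isomorphism it preserves the order, so $\exp(x)=\varphi^{-1}(\exp(\widehat x))\ge 0$ in $E_u$. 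For (3), observe that $\exp(\widehat x)$ is not merely positive but bounded below away from $0$ on the compact space $X$ (its infimum is $e^{-\|\widehat x\|}>0$), hence invertible in the Banach algebra $\mathcal C(X)$ with inverse $\exp(-\widehat x)$; applying $\varphi^{-1}$, the element $\exp(-x)=\varphi^{-1}(\exp(-\widehat x))$ is a multiplicative inverse of $\exp(x)$ in the $f$-algebra $E_u$. (Alternatively, (3) is immediate from (1) with $y=-x$, giving $\exp(x)\exp(-x)=\exp(0)=\exp(\varphi^{-1}\mathbb 1)=u$, the algebra unit.)

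The only point requiring a little care — and the one I would single out as the main obstacle — is justifying that $\varphi$ genuinely intertwines the abstract $\exp$ on $E_u$ with the concrete $\exp$ on $\mathcal C(X)$, including continuity of the limit; but this is exactly what was shown in the proof of Theorem~\ref{expo} (the partial sums $S_n(\widehat x)$ converge uniformly, hence in particular pointwise, to a continuous function, which is $\varphi$ applied to the $o$-limit $S_n(x)\to\exp(x)$, using Theorem~1 of~\cite{yosida194270}). Once that bridge is in place, all three statements are one-line consequences of the corresponding scalar facts, and indeed (3) can simply be deduced from (1). I would present (1) and (2) via the representation and then derive (3) from (1) to keep the argument short.
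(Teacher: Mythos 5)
Your proposal is correct and follows essentially the same route as the paper: all three claims are transported through the Yoshida isomorphism via $\exp=\varphi^{-1}\circ\exp\circ\varphi$, with (1) and (2) reduced to the scalar facts in $\mathcal{C}(X)$ and (3) obtained from (1) with $y=-x$, which is exactly the paper's argument. The extra care you flag about $\varphi$ intertwining the two exponentials is precisely what Theorem~\ref{expo} supplies, so nothing further is needed.
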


\begin{proof}
\begin{enumerate}
\item Let $x$ and $y$ in $E_u$. Since $\varphi$ and $\varphi^{-1}$ are Riesz and ring homomorphisms we get :\[\begin{array}{lcl}
\exp(x+y)&=&\varphi^{-1} \circ \exp\circ \varphi(x+y)\\
 &=& \varphi^{-1} (\exp(\varphi(x))\exp(\varphi(y)))\\
 &=&  \varphi^{-1} (\exp(\varphi(x))) \varphi^{-1} (\exp(\varphi(y))) \\
 &=& \exp(x)\exp(y).
\end{array}\]
Which is the desired result.
\item For every $x$ in $E_u$, we have $\exp(x)=\varphi^{-1} \circ \exp\circ \varphi (x)$. The result follows from the positiveness of the function $\exp$ in $\mathcal{C}(X)$.
\item from the first point, we get that \[u=\exp(x-x)=\exp(x)\exp(-x)\] for every $x$ in $E_u$. This yields to the fact that $\exp(x)$ is invertible in $E_u$ and its inverse is $\exp(-x)$.
\end{enumerate}
\end{proof}

The next technical proposition will play a key role in the next section.

\begin{proposition}\label{band}
Let $E_u$ be a Dedekind complete Riesz Space with a strong unit $u$, Then for all $x$ and $y$ in $E_u$ , there exists a positive invertible element $z$ in $E_u$ such that  $exp(x) - exp(y) = z(x-y) .$ 
\end{proposition}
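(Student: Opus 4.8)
The plan is to transport the problem to $C(X)$ via the Yoshida isomorphism $\varphi$ of Theorem \ref{yos}, solve it pointwise there using the elementary mean value identity for the real exponential, and then pull the solution back to $E_u$. Write $\hat x=\varphi(x)$ and $\hat y=\varphi(y)$, so that $\hat x,\hat y\in C(X)$. Recall from Theorem \ref{expo} and the identity $\exp=\varphi^{-1}\circ\exp\circ\varphi$ preceding Proposition \ref{propexp} that $\varphi(\exp(x))=\exp(\hat x)$ and $\varphi(\exp(y))=\exp(\hat y)$, where $\exp$ on the right is the usual exponential on $C(X)$, i.e.\ $\exp(\hat x)(t)=e^{\hat x(t)}$ for every $t\in X$.

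First I would define the candidate element $\hat z\in C(X)$ by the parametric integral
\[
\hat z(t)=\int_0^1 \exp\bigl((1-s)\hat y(t)+s\hat x(t)\bigr)\,ds,\qquad t\in X.
\]
Since $X$ is compact and $\hat x,\hat y$ are continuous, the integrand is jointly continuous in $(s,t)$, so $\hat z$ is continuous on $X$; moreover $\hat z(t)>0$ for every $t$, and by compactness $m:=\min_{t\in X}\hat z(t)>0$, so $\hat z\ge m\,\mathbb{1}$. Hence $\hat z$ is a strictly positive element of $C(X)$, and therefore invertible in the $f$-algebra $C(X)$, its inverse $1/\hat z$ being continuous. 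Using that $\frac{d}{ds}\exp\bigl((1-s)\hat y(t)+s\hat x(t)\bigr)=(\hat x(t)-\hat y(t))\exp\bigl((1-s)\hat y(t)+s\hat x(t)\bigr)$ together with the fundamental theorem of calculus gives, for every $t\in X$,
\[
\hat z(t)\bigl(\hat x(t)-\hat y(t)\bigr)=\int_0^1 \frac{d}{ds}\exp\bigl((1-s)\hat y(t)+s\hat x(t)\bigr)\,ds=e^{\hat x(t)}-e^{\hat y(t)},
\]
that is, $\hat z(\hat x-\hat y)=\exp(\hat x)-\exp(\hat y)$ in $C(X)$.

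Now set $z:=\varphi^{-1}(\hat z)\in E_u$. Because $\varphi$ is both a Riesz isomorphism and an algebra isomorphism, $\varphi^{-1}$ preserves positivity and multiplication, so $z\ge 0$ and $z$ is invertible in $E_u$ with inverse $\varphi^{-1}(1/\hat z)$. Applying $\varphi^{-1}$ to the displayed identity and using multiplicativity yields
\[
z(x-y)=\varphi^{-1}\bigl(\hat z(\hat x-\hat y)\bigr)=\varphi^{-1}\bigl(\exp(\hat x)-\exp(\hat y)\bigr)=\exp(x)-\exp(y),
\]
which is exactly the assertion.

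The only genuine point to check carefully is that $\hat z$, which off the diagonal equals the divided difference $(e^{\hat x(t)}-e^{\hat y(t)})/(\hat x(t)-\hat y(t))$ and on the diagonal equals $e^{\hat x(t)}$, is continuous on all of $X$; the integral representation above is precisely the device that makes this transparent while simultaneously exhibiting positivity. An essentially equivalent route is to reduce to one variable via $\exp(x)-\exp(y)=\exp(y)\bigl(\exp(x-y)-u\bigr)$ and then write $\exp(w)-u=w\sum_{k\ge1}\tfrac{1}{k!}w^{k-1}$, checking in $C(X)$ that $\sum_{k\ge1}\tfrac{1}{k!}\hat w(t)^{k-1}=\int_0^1 e^{s\hat w(t)}\,ds>0$; the product of the two positive invertible factors $\exp(y)$ and $\sum_{k\ge1}\tfrac{1}{k!}(x-y)^{k-1}$ then furnishes the desired $z$.
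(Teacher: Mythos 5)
Your proposal is correct and follows essentially the same route as the paper: represent $x,y$ in $C(X)$ via Yoshida, define $\hat z(t)=\int_0^1 \exp\bigl(s\hat x(t)+(1-s)\hat y(t)\bigr)\,ds$, note it is continuous and strictly positive (hence invertible), verify $\hat z(\hat x-\hat y)=\exp(\hat x)-\exp(\hat y)$, and pull back with $\varphi^{-1}$. You merely spell out the continuity and invertibility checks in more detail than the paper does.
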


\begin{proof}
Let $x$ and $y$ be in $E_u$. We will denote $\hat{x}$ and $\hat{y}$ their representant is $\mathcal{C}(X)$ respectively. Let \[\hat{z}(t)=\left \{\begin{array}{lcl}
\frac{\exp(\hat{x}(t))-\exp(\hat{y}(t))}{\hat{x}(t)-\hat{y}(t)} &\text{ if } &\hat{x}(t) \neq \hat{y}(t)\\
& & \\
\exp(\hat{x}(t))  &\text{ if }& \hat{x}(t)= \hat{y}(t)
\end{array}\right. \] for every $t$ in $X$. Observe that $\displaystyle \hat{z}(t)=\int_0^1 \exp(s\hat{x}(t)+(1-s)\hat{y}(t))ds$ which is continuous and strictly positive from the classical Lebesgue theorems. It follows that \begin{equation}\exp(\hat{x})-\exp(\hat{y})=\hat{z}(\hat{x}-\hat{y})
\label{eq11}\end{equation}  and $\hat{z}$ and $\frac{1}{\hat{z}}$ are both in $\mathcal{C}(X)$.

Composing (\ref{eq11}) by $\varphi^{-1}$, we get the desired result.
\end{proof}

\section{Moment generating function in Riesz Spaces}

Once we defined the exponential function on a Dedekind complete Riesz space with strong order unit $u$, $E_u$, we are able to define the Moment generating function on it.

\begin{definition}[Moment generating function]
Let $T$ be a conditional expectation on $E$. For every $x\in E_u$, we define the map $\mathcal{M}_x: \mathbb{R} \to E_u$, by $$\mathcal{M}_x: t\mapsto T(exp(tx))$$
$\mathcal{M}_x$ will be called \textit{the moment generating function} of $x$.
\end{definition}

We are widely inspired from the Lemma 4.1 in \cite{kuo2016bernoulli} to prove the next Lemma.

\begin{lemma}\label{puissance}
Let $T$ be a strictly positive conditional expectation on a Dedekind complete Riesz space with strong order unit $u$, $E_u$. If $f$ and $g$ are two independent elements then \[Tf^ng^m=Tf^nTg^m=Tg^mTf^n\] holds for every natural numbers $n$ and $m$.
\end{lemma}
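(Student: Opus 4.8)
The plan is to deduce the statement from Lemma~\ref{indop} once we know that $f^{n}$ and $g^{m}$ are themselves $T$-conditionally independent. Recall from the preliminaries that the $T$-conditional independence of the elements $f$ and $g$ means precisely that the closed Riesz subspaces $E_{1}:=\langle f\cup R(T)\rangle$ and $E_{2}:=\langle g\cup R(T)\rangle$ are $T$-conditionally independent. So the crux is to check that $f^{n}\in E_{1}$ and $g^{m}\in E_{2}$ for all $n,m\in\mathbb{N}$. Granting this, we have $\langle f^{n}\cup R(T)\rangle\subseteq E_{1}$ and $\langle g^{m}\cup R(T)\rangle\subseteq E_{2}$; since shrinking a Riesz subspace only discards band projections, $f^{n}$ and $g^{m}$ are again $T$-conditionally independent, and both belong to $E_{u}$ because $E_{u}$ is an $f$-algebra, hence closed under products. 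Applying Lemma~\ref{indop} to the pair $(f^{n},g^{m})$ then gives $Tf^{n}g^{m}=Tf^{n}Tg^{m}=Tg^{m}Tf^{n}$, as desired.

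To establish $f^{n}\in E_{1}$, I would pass to the Yoshida representation $\varphi\colon E_{u}\to C(X)$ of Theorem~\ref{yos}, which is at once a Riesz and an algebra isomorphism with $\varphi(u)=\mathbb{1}$. Put $\hat f=\varphi(f)$. Then $\varphi(E_{1})$ is a closed Riesz subspace of $C(X)$ containing $\hat f$ and $\mathbb{1}$, so it is enough to show $\hat f^{\,n}\in\varphi(E_{1})$. As $\hat f$ is bounded, its range lies in some interval $[a,b]$, and $t\mapsto t^{n}$ is a uniform limit on $[a,b]$ of continuous piecewise-affine functions; every such function can be written as $t\mapsto \alpha t+\beta+\sum_{k}c_{k}\,|t-t_{k}|$, so its value at $\hat f$ equals $\alpha\hat f+\beta\mathbb{1}+\sum_{k}c_{k}\,|\hat f-t_{k}\mathbb{1}|$, an element of the Riesz subspace generated by $\hat f$ and $\mathbb{1}$. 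Since the approximation is uniform, these approximants converge in $\|\cdot\|_{\infty}$ to $\hat f^{\,n}$; transporting back by $\varphi^{-1}$ exactly as in the proof of Theorem~\ref{expo}, the corresponding elements $\alpha f+\beta u+\sum_{k}c_{k}\,|f-t_{k}u|\in E_{1}$ are ru-convergent, hence o-convergent, to $f^{n}$. As $E_{1}$ is order closed it contains $f^{n}$; symmetrically $g^{m}\in E_{2}$, while the cases $n=0$ or $m=0$ are trivial since $u\in R(T)\subseteq E_{1}\cap E_{2}$.

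The single genuine obstacle I anticipate is this last claim — that the closed Riesz subspace generated by an element together with $R(T)$ already swallows all powers of that element; this is the Riesz-space incarnation of the fact that functions of independent random variables stay independent, and it is what forces the use of the representation theorem (or, alternatively, the Freudenthal spectral theorem). The remaining ingredients — monotonicity of $T$-conditional independence under passage to subspaces, the $f$-algebra structure of $E_{u}$, and the lone appeal to Lemma~\ref{indop} — are routine.
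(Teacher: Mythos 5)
Your argument is correct in substance, but it follows a genuinely different route from the paper. The paper never reduces to Lemma~\ref{indop}: it uses the $T$-conditional independence of $E_f=\langle \mathcal{R}(T),f\rangle$ and $E_g=\langle \mathcal{R}(T),g\rangle$ together with the Radon--Nikodym (And\^o--Douglas) theorem of \cite{watson2009ando} to produce conditional expectations $T_f,T_g$ with ranges $E_f,E_g$ and $T=T_fT_g=T_gT_f$, and then computes $Tf^ng^m$ directly via the averaging property ($T_f(f^{n-1}h)=f^{n-1}T_f h$, etc.), so no claim that $f^n\in E_f$ is ever needed. You instead prove exactly that claim --- powers of $f$ lie in the order-closed Riesz subspace generated by $f$ and $R(T)$, via Yoshida representation and uniform approximation of $t\mapsto t^n$ by lattice combinations $\alpha t+\beta+\sum_k c_k|t-t_k|$ --- conclude by monotonicity of independence under passage to Riesz subspaces that $f^n$ and $g^m$ are $T$-conditionally independent, and then quote Lemma~\ref{indop}. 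Your route buys more: it shows that continuous functions of independent elements remain independent (so it would also give, say, the independence of $\exp(tf)$ and $\exp(tg)$ directly, shortcutting Theorem~\ref{ext}), whereas the paper's factorization argument is shorter and avoids the approximation step. Two mild caveats, neither fatal and both matching the paper's own level of rigor: Lemma~\ref{indop} is stated for $T$-universally complete spaces, so invoking it inside $E_u$ involves the same hypothesis-matching the paper itself performs in Lemma~\ref{indep}; and your treatment of the cases $n=0$ or $m=0$ (and the inclusion $u\in R(T)$) silently uses $Tu=u$, a normalization the paper assumes implicitly throughout.
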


\begin{proof}
Since $f$ and $g$ are $T$-independent, it follows that the closed Riesz subspaces $E_f=<\mathcal{R}(T),f>$ and  $E_g=<\mathcal{R}(T),g>$ generated by $ \mathcal{R}(T)$ and $f$ and by $\mathcal{R}(T)$ and $g$ respectively are $T$-independent. From the Radon-Nikodym Theorem (see \cite{watson2009ando}), there exist two conditional expectation $T_f$ and $T_g$ with ranges $E_f$ and $E_g$ respectively such that \[ T= T_fT_g=T_gT_f\]

We will observe first that $Tf^n =T_gf^n$. Indeed \[\begin{array}{lcl}
Tf^n&=&T_gT_f f^n\\
 & = &T_g f T_f f^{n-1}\\
 & = &\dots\\
 & = & T_g f^n
\end{array} \] as $f$ is in $E_f$. As $f$ and $g$ play symmetric roles we can affirm that $Tg^m =T_fg^m$.

Now, we can use the latter fact to prove the desired result. Indeed,\[\begin{array}{lcl}
Tf^ng^m &= & T_gT_f f^ng^m \\
& = &  T_g f T_f f^{n-1} g^m \text{ as $f$ is in $\mathcal{R}(T_f)$}\\
& = & T_g f^n T_f g^m\\
& = & T_g f^n Tg^m \\
&= & T g^m T_g f^n  \text{ as $\mathcal{R}(T)\subset \mathcal{R}(T_f)$}\\
 &= & Tg^mTf^n
\end{array} \]which makes an end to our proof.

\end{proof}
At this point, we are able to prove the main result of this section.
\begin{theorem}\label{ext}
If $f$ and $g$ are two $T$-independent elements in the Dedekind complete Riesz space with strong unit $u$, then \[\mathcal{M}_{f+g}=\mathcal{M}_f\ \mathcal{M}_g\]
holds.
\end{theorem}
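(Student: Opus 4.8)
The plan is to reduce the statement to the moment identity already proved in Lemma~\ref{puissance}. Fix $t\in\mathbb{R}$. By Proposition~\ref{propexp}(1) we have $\exp(t(f+g))=\exp(tf+tg)=\exp(tf)\exp(tg)$, so it suffices to show that
\[
T\!\left(\exp(tf)\exp(tg)\right)=T\!\left(\exp(tf)\right)T\!\left(\exp(tg)\right).
\]
The strategy is to prove the corresponding equality for the partial sums $S_n(tf)=\sum_{k=0}^{n}\frac{t^{k}}{k!}f^{k}$ and $S_m(tg)=\sum_{l=0}^{m}\frac{t^{l}}{l!}g^{l}$ from Theorem~\ref{expo}, and then pass to the limit $n,m\to\infty$.

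For the partial sums I would multiply out in the $f$-algebra $E_u$ and use linearity of $T$ together with Lemma~\ref{puissance} (which gives $T(f^{k}g^{l})=T(f^{k})\,T(g^{l})$ for all $k,l$):
\[
T\!\left(S_n(tf)\,S_m(tg)\right)=\sum_{k=0}^{n}\sum_{l=0}^{m}\frac{t^{\,k+l}}{k!\,l!}\,T(f^{k})\,T(g^{l})=\left(\sum_{k=0}^{n}\frac{t^{k}}{k!}T(f^{k})\right)\!\left(\sum_{l=0}^{m}\frac{t^{l}}{l!}T(g^{l})\right)=T\!\left(S_n(tf)\right)T\!\left(S_m(tg)\right),
\]
so $\mathcal{M}_{f+g}$ and $\mathcal{M}_f\mathcal{M}_g$ agree at the level of partial sums. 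To pass to the limit I would work with the Jauge norm $\|\cdot\|_u$: by Theorem~\ref{expo}, $S_n(tf)\to\exp(tf)$ and $S_m(tg)\to\exp(tg)$ in the ru-sense, i.e.\ in $\|\cdot\|_u$. Since $\varphi$ is an algebra isometry of $E_u$ onto $(\mathcal{C}(X),\|\cdot\|_\infty)$, multiplication on $E_u$ is $\|\cdot\|_u$-submultiplicative, and $T$ restricts to a $\|\cdot\|_u$-bounded (indeed contractive, since $Tu=u$) operator on $E_u$. Applying these continuities to both sides of the partial-sum identity — for instance first letting $n\to\infty$ with $m$ fixed, then $m\to\infty$ — and invoking uniqueness of limits yields $\mathcal{M}_{f+g}(t)=\mathcal{M}_f(t)\,\mathcal{M}_g(t)$; as $t\in\mathbb{R}$ was arbitrary, $\mathcal{M}_{f+g}=\mathcal{M}_f\mathcal{M}_g$.

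The main obstacle is exactly this limit interchange: one must be careful that the $f$-algebra multiplication and the operator $T$ are continuous for the mode of convergence provided by Theorem~\ref{expo}. I expect the route through $\varphi$ and $\|\cdot\|_u$ sketched above to settle this without surprises, with submultiplicativity of $\|\cdot\|_\infty$ on $\mathcal{C}(X)$ and positivity of $T$ doing all the work. An alternative, purely order-theoretic argument bypasses the explicit estimates: each $S_n(tf)$ lies in the closed Riesz subspace $\langle\mathcal{R}(T),f\rangle$ (a closed sublattice containing $u$ is closed under the $f$-algebra powers of its generator, by the lattice form of Stone--Weierstrass), hence so does its limit $\exp(tf)$, and likewise $\exp(tg)\in\langle\mathcal{R}(T),g\rangle$; since these two closed Riesz subspaces are $T$-independent whenever $f$ and $g$ are, the elements $\exp(tf)$ and $\exp(tg)$ are $T$-independent and Lemma~\ref{indop} applies directly to give the claim.
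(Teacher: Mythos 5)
Your main argument is essentially the paper's proof: expand both exponentials as power series, apply Lemma~\ref{puissance} termwise, and reassemble the product by passing to the limit — where the paper simply invokes the order continuity of $T$ to justify the limit interchange, you make it explicit via partial sums and the $\|\cdot\|_u$-continuity of multiplication and of $T$, which is a sound (indeed slightly more careful) rendering of the same step. The proposal is correct and matches the paper's approach; your closing aside via Lemma~\ref{indop} would be a genuinely different shortcut, but it is not your main route.
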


\begin{proof}
Let $f$ and $g$ be two $T$-independent elements in $E_u$, and $t$ a real number then \[
\begin{aligned}
\mathcal{M}_{f+g}(t) &=& T\exp(tf+tg)\\
 & =& T\exp(tf)\exp(tg)\\
 & =& T \sum_{k=0}^\infty \frac{(tf)^k}{k!} \sum_{j=0}^\infty \frac{(tg)^j}{j!}\\
\end{aligned}\]
From the order continuity of $T$ it follows that \[
\mathcal{M}_{f+g}(t) = \sum_{k=0}^\infty \sum_{j=0}^\infty \frac{t^k}{k!} \frac{t^j}{j!} T(f^k g^j)\]
Lemma \ref{puissance} and the order continuity of $T$ again yield to \[\begin{aligned}
\mathcal{M}_{f+g}(t) &= &\sum_{k=0}^\infty \sum_{j=0}^\infty \frac{t^k}{k!} \frac{t^j}{j!} Tf^k Tg^j \\
& = & T\sum_{k=0}^\infty \frac{(tf)^k}{k!} T\sum_{j=0}^\infty \frac{(tg)^j}{j!} \\
&=& \mathcal{M}_f(t) \mathcal{M}_g(t)
\end{aligned}\]
And we are done.
\end{proof}

\section{The Chernoff inequality in Riesz Spaces}
We start our study with the following technical lemma:
\begin{lemma}\label{exband}
Let $E_u$ be a Dedekind complete Riesz space with a strong unit u, then for all $x$ and $y$ in $E_u$, the projection band generated by $(x-y)^+$ is equal to the projection band generated by $(\exp(\lambda x)-\exp(\lambda y))^+$.
\end{lemma}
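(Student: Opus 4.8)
The plan is to transfer the statement to $C(X)$ via the Yoshida isomorphism $\varphi$ of Theorem~\ref{yos}, where band projections correspond to multiplication by characteristic functions of clopen sets (or, more precisely, projection bands correspond to carriers of functions), and then exploit the elementary fact that $\exp$ is strictly increasing on $\mathbb{R}$. First I would fix $x,y \in E_u$ and write $\hat x = \varphi(x)$, $\hat y = \varphi(y)$ in $C(X)$. The projection band generated by an element $f \in E_u$ is determined by the ``support'' of $\hat f$, in the sense that two elements generate the same projection band precisely when their absolute values have the same carrier (the closure of $\{t : \hat f(t) \neq 0\}$, or the appropriate regular-open version). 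So it suffices to show that $(\hat x - \hat y)^+$ and $(\exp(\lambda \hat x) - \exp(\lambda \hat y))^+$ have the same carrier in $C(X)$.

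The key pointwise observation is that for each $t \in X$ and each $\lambda \neq 0$ (I would treat $\lambda > 0$; the case $\lambda<0$ is symmetric or reduces to replacing $x,y$), one has $\hat x(t) - \hat y(t) > 0$ if and only if $\exp(\lambda \hat x(t)) - \exp(\lambda \hat y(t)) > 0$, because $s \mapsto \exp(\lambda s)$ is strictly increasing. Hence the sets $\{t : (\hat x - \hat y)^+(t) > 0\}$ and $\{t : (\exp(\lambda\hat x) - \exp(\lambda \hat y))^+(t) > 0\}$ coincide exactly, and therefore so do their closures. Alternatively, and perhaps more cleanly, I would invoke Proposition~\ref{band}: there is a positive invertible $z \in E_u$ with $\exp(\lambda x) - \exp(\lambda y) = z\,(\lambda x - \lambda y) = \lambda z (x-y)$. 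Since $z$ is invertible and positive, multiplication by $z$ is an order isomorphism of $E_u$ commuting with band projections, so it maps the band generated by $(x-y)^+$ onto itself; combined with $\lambda > 0$ this gives that $(\exp(\lambda x)-\exp(\lambda y))^+ = \lambda z (x-y)^+$ generates the same projection band as $(x-y)^+$. I expect this second route via Proposition~\ref{band} to be the slicker one and would present it as the main argument, keeping the $C(X)$ picture in reserve for intuition.

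The main obstacle, and the point requiring care, is the precise statement ``$z$ positive invertible $\Rightarrow$ multiplication by $z$ preserves projection bands.'' One must check that for positive invertible $z$ and any $w \in E_u$, the band projection onto the band generated by $w$ agrees with that onto the band generated by $zw$; equivalently $w$ and $zw$ are ``equisupported.'' This follows because $zw^+ = (zw)^+$ (as $z \geq 0$ and multiplication in the $f$-algebra $E_u$ is positive and, via $\varphi$, pointwise), and because $zw$ and $w$ have the same null ideal: $zw \wedge v = 0 \iff w \wedge v = 0$ for any $v \geq 0$, using invertibility of $z$ to go backwards. I would state this as a short preliminary observation (or cite the $f$-algebra literature already referenced, e.g.~\cite{zaanen2012introduction}), then apply it with $z$ the element from Proposition~\ref{band} and with $w = x-y$, and conclude. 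A minor additional point is handling $\lambda = 0$ (both bands are trivial) and $\lambda < 0$ (where $(\exp(\lambda x) - \exp(\lambda y))^+$ corresponds to $(y-x)^+ = (x-y)^-$, which does \emph{not} generate the same band in general) — so I would either restrict to $\lambda > 0$ in the statement or note the sign convention explicitly.
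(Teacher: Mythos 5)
Your main argument is exactly the paper's proof: apply Proposition~\ref{band} to get $\exp(\lambda x)-\exp(\lambda y)=z(\lambda x-\lambda y)$ with $z$ positive invertible, take positive parts, and conclude that the two elements generate the same projection band. Your extra care about why multiplication by a positive invertible element preserves carriers, and your remark that the statement really needs $\lambda>0$ (the paper silently drops $\lambda$ and ignores the sign issue), are sensible refinements of the same route rather than a different proof.
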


\begin{proof}
Proposition \ref{band} yileds to \[\exp(x)-\exp(y) = z(x-y)\] for some invertible positive element $z$. It follows that \[(\exp(x)-\exp(y))^+ = z(x-y)^+\]and then,

\[\{ (\exp(x)-\exp(y))^+\}^{\perp\perp}= \{ (x-y)^+\}^{\perp\perp}\]
which makes an end to our proof.
\end{proof}
\begin{lemma}\label{indep}
Let $E_u$ be a Dedekind complete Riesz space with a strong unit $u$ and $T$ a strictly positive conditional expectation on $E_u$. Let $(P_j)_{j\in \mathbb{N}}$ be a $T$-conditionally independent band projections with $TP_j u=f$ for all $j\in \mathbb{N}$ for some fixed $f$ in $E_u$. Then for any stricly positive real number $\lambda$ and all $n \in \mathbb{N}$, we have the following equality:
\[ T\prod_{i=1}^n \exp(\lambda P_i u) = \prod_{i=1}^n(u+(\exp(\lambda)-1)f)\] 
holds.
\end{lemma}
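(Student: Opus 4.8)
The plan is to collapse the exponential of each band element into a linear expression, expand the resulting finite product, and then apply $T$ using the conditional independence of the family $(P_j)$.

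First I would record that each $P_i u$ is an idempotent of the $f$-algebra $E_u$: indeed $P_i u$ and $u-P_i u$ are disjoint positive elements, and since the product of two disjoint positive elements of an Archimedean $f$-algebra vanishes, $0=(P_i u)(u-P_i u)=P_i u-(P_i u)^2$, so $(P_i u)^2=P_i u$. (Equivalently, under the Yoshida isomorphism $\varphi$ of Theorem~\ref{yos} the element $\varphi(P_i u)$ is the indicator $\mathbb{1}_{U_i}$ of a clopen set $U_i\subseteq X$.) Hence $(P_i u)^k=P_i u$ for all $k\ge 1$, and the partial sums of Theorem~\ref{expo} telescope: $S_n(\lambda P_i u)=u+\big(\sum_{k=1}^n \lambda^k/k!\big)P_i u$, which ru-converges to $u+(e^\lambda-1)P_i u$. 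Therefore $\exp(\lambda P_i u)=u+(e^\lambda-1)P_i u$ for every $i$.

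Next, since the $f$-multiplication on $E_u$ is commutative and associative, I would multiply these identities and expand the product over subsets: $\prod_{i=1}^n \exp(\lambda P_i u)=\prod_{i=1}^n\big(u+(e^\lambda-1)P_i u\big)=\sum_{S\subseteq\{1,\dots,n\}}(e^\lambda-1)^{|S|}\prod_{i\in S}P_i u$. Applying $T$ and using the $T$-conditional independence of the family $(P_j)$, for each finite $S$ one has $T\big(\prod_{i\in S}P_i u\big)=\prod_{i\in S}T(P_i u)=f^{|S|}$; this is obtained by induction on $|S|$, peeling off one factor at a time, since each partial product $\prod_{i\in S'}P_i u$ lies in the closed Riesz subspace $\langle\mathcal{R}(T),\{P_i u: i\in S'\}\rangle$, which by the definition of $T$-conditional independence of a family is $T$-conditionally independent of $\langle\mathcal{R}(T),P_{i_0}u\rangle$ for any $i_0\notin S'$, so that Lemma~\ref{indop} applies (no powers of the $P_i u$ intervene because they are idempotent). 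Summing, $T\prod_{i=1}^n\exp(\lambda P_i u)=\sum_{S}(e^\lambda-1)^{|S|}f^{|S|}=\sum_{k=0}^n\binom{n}{k}(e^\lambda-1)^k f^k=\big(u+(e^\lambda-1)f\big)^n=\prod_{i=1}^n\big(u+(e^\lambda-1)f\big)$, which is the claimed identity.

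I expect the only genuinely delicate point to be the multi-factor identity $T\big(\prod_{i\in S}P_i u\big)=\prod_{i\in S}T(P_i u)$: Lemmas~\ref{indop} and~\ref{puissance} are stated only for a pair of elements, so one must invoke the family version of $T$-conditional independence and verify that the partial products remain inside the appropriate generated subspaces; once that bookkeeping is done the induction is immediate, and everything else is the routine idempotent and exponential computation described above. (In fact the hypothesis $\lambda>0$ is not used for the identity itself, only $\lambda\in\mathbb{R}$.)
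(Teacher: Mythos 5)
Your proposal is correct and follows essentially the same route as the paper: reduce $\exp(\lambda P_i u)$ to $u+(e^\lambda-1)P_i u$ via idempotency of $P_i u$, expand the finite product, and apply $T$ factor by factor using the $T$-conditional independence of the family together with Lemma~\ref{indop}. In fact you spell out two points the paper leaves implicit (the $f$-algebra argument for $(P_i u)^2=P_i u$ and the bookkeeping needed to apply the pairwise Lemma~\ref{indop} iteratively to multi-factor products, which works because $P_{i_1}u\cdots P_{i_k}u=P_{i_1}u\wedge\cdots\wedge P_{i_k}u$ stays in the generated Riesz subspace), so no changes are needed.
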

\begin{proof}
Using the definition of the exponential in Theorem \ref{expo}, we have :
\[\begin{aligned}
\exp(\lambda P_i u) &= \sum_{k=0}^\infty \frac{(\lambda P_i u)^k}{k!}\\
&=\sum_{k=0}^\infty \frac{(\lambda )^k}{k!} P_i u +u-P_i u \\
&= u + P_i u (\exp(\lambda) - 1).
\end{aligned}\]
 As a result, we get :
\[ \prod_{i=1}^n \exp(\lambda P_i u) =  \prod_{i=1}^n (u+\alpha P_i u ) = \sum_{k=0}^n \alpha \sigma_k\]
 where \[\alpha = \exp(\lambda)-1\] and \[\sigma_k = \sum_{1\leq i_1<..<i_k\leq n} \alpha P_{i_1} u...P_{i_k} u.\] 
 Let $\sigma_0=u$

Finally the $T$-conditional independence of $P_1,...,P_n$ and lemma (\ref{indop}) applied iteratively give 
\[ T(\prod_{i=1}^n (u+\alpha P_i u )) = T(\sum_{k=0}^n \alpha^k \sigma_k ) = \sum_{k=0}^n \alpha^k T(\sigma_k) = \prod_{i=1}^n (u+\alpha f)\]which make an end to our proof.
\end{proof}

At this point, we gathered all the ingredients we need to prove the main result of our work.
\begin{theorem}[Cherrnoff's inequality]
Let $E_u$ be a Dedekind complete Riesz space with a strong unit $u$ and $T$ a strictly positive conditional expectation on $E_u$. Let $(P_j)_{j\in \mathbb{N}}$ be a Bernoulli process with $TP_ju=f$ for all $j\in \mathbb{N}$ for some fixed $f$ in $E_u$ and let $S_n=\sum_{j=0}^n\ P_j u$ then 
\[ TP_{(S_n-tu)^+}u\leq\left(\frac{ne\|f\|_u}{t}\right)^t \exp(-nf)\] holds for any strictly positive scalar $t$ such that $t > n\|f\|_u$.
\end{theorem}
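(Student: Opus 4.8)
The plan is to transcribe the classical Chernoff argument into $E_u$: dominate the band projection of $(S_n-tu)^+$ by a Markov-type inequality applied to $\exp(\lambda S_n)$, evaluate the ``moment generating function'' $T\exp(\lambda S_n)$ explicitly via $T$-conditional independence, and then make the scalar $\lambda$ as favourable as possible, using $\|f\|_u$ as a numerical surrogate for the lattice element $f$.

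First I would record the needed Markov-type inequality in $E_u$: if $b\ge 0$ and $\mu>0$, then $TP_{(b-\mu u)^+}u\le \mu^{-1}Tb$. Writing $w=(b-\mu u)^+$ and letting $P$ be the band projection onto the band generated by $w$, the element $(b-\mu u)^-$ is disjoint from $w$, so $P(b-\mu u)=Pw=w\ge 0$, i.e. $Pb\ge\mu Pu$; applying the positive operator $T$ and using $b\ge Pb\ge 0$ then gives $Tb\ge T(Pb)\ge\mu\,TPu$. Next, fix $\lambda>0$. Lemma \ref{exband}, applied with $x=S_n$ and $y=tu$ (so that $\exp(\lambda tu)=e^{\lambda t}u$), shows that $P_{(S_n-tu)^+}=P_{(\exp(\lambda S_n)-e^{\lambda t}u)^+}$, and the Markov inequality with $b=\exp(\lambda S_n)\ge 0$ (positivity from Proposition \ref{propexp}) and $\mu=e^{\lambda t}$ yields
\[ TP_{(S_n-tu)^+}u\le e^{-\lambda t}\,T\exp(\lambda S_n). \]

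Now I would compute and bound the right-hand side. By Proposition \ref{propexp}(1), $\exp(\lambda S_n)=\prod_j\exp(\lambda P_j u)$, and Lemma \ref{indep} evaluates $T\exp(\lambda S_n)=\bigl(u+(e^\lambda-1)f\bigr)^n$. Since $1+s\le e^s$ for every real $s$, Yoshida's theorem (Theorem \ref{yos}) gives $u+(e^\lambda-1)f\le\exp\bigl((e^\lambda-1)f\bigr)$ in $E_u$; raising both nonnegative sides to the $n$-th power in the $f$-algebra $E_u$ (again by passing through $\varphi$) gives $T\exp(\lambda S_n)\le\exp\bigl(n(e^\lambda-1)f\bigr)$, hence $TP_{(S_n-tu)^+}u\le e^{-\lambda t}\exp\bigl(n(e^\lambda-1)f\bigr)$ for every $\lambda>0$.

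Finally I would choose $\lambda$. The hypothesis $t>n\|f\|_u$ makes $\lambda:=\ln\bigl(t/(n\|f\|_u)\bigr)$ strictly positive, and with this choice $e^{-\lambda t}=(n\|f\|_u/t)^{t}$ and $n(e^\lambda-1)f=(t/\|f\|_u)f-nf$, so
\[ TP_{(S_n-tu)^+}u\le\Bigl(\tfrac{n\|f\|_u}{t}\Bigr)^{t}\exp\!\Bigl(\tfrac{t}{\|f\|_u}f\Bigr)\exp(-nf). \]
The decisive move is to apply the estimate $f\le\|f\|_u u$ only to the first exponential, keeping $\exp(-nf)$ intact: this turns $\exp\bigl(\tfrac{t}{\|f\|_u}f\bigr)$ into $\exp(tu)=e^{t}u$, and $(n\|f\|_u/t)^{t}e^{t}=(ne\|f\|_u/t)^{t}$ gives the asserted bound. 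I expect this last step to be the only genuine obstacle: there is no true optimisation over $\lambda$ because $\lambda$ is a scalar while the mean $f$ is a lattice element, so one must commit to the surrogate $\|f\|_u$ and, crucially, use $f\le\|f\|_u u$ \emph{asymmetrically} — on the ``penalty'' exponential produced by $e^\lambda-1$ but not on the $\exp(-nf)$ factor — since a symmetric application would collapse the bound to the scalar one and destroy the $\exp(-nf)$ refinement. The remaining points (positivity of every element to which $\exp$ or a power is applied, monotonicity of $\exp$ on $E_u$ through Yoshida, and the precise range of the summation index in $S_n$) are routine.
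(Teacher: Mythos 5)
Your proposal is correct and follows essentially the same route as the paper's proof: the same band-projection (Markov/Chebyshev) argument together with Lemma \ref{exband}, the product formula of Lemma \ref{indep}, the estimate $u+x\le\exp(x)$, the identical choice $\lambda=\log\bigl(t/(n\|f\|_u)\bigr)$, and the final asymmetric use of $f\le\|f\|_u u$ on the penalty exponential only. The only difference is presentational: you package the Markov-type step as a standalone lemma, which the paper proves inline via inequalities (\ref{eq1})--(\ref{eq3}).
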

\begin{proof}
From Lemma \ref{exband}, one can deduce the following equality :
\[\begin{array}{lcl}
TP_{(S_n-tu)^+} u &=& TP_{(\exp(\lambda S_n)-\exp(\lambda u))^+} u.
\end{array}\]
Notice  that $P_{(\exp(\lambda S_n)-\exp(\lambda t)u)^+}$ is the band projection onto the band generated by $(\exp(\lambda S_n)-\exp(\lambda t)u)^+$. It follows that \[P_{(\exp(\lambda S_n)-\exp(\lambda t)u)^+}(\exp(\lambda S_n)-\exp(\lambda t)u)\geq 0\] Then, \begin{equation}
 P_{(\exp(\lambda S_n)-\exp(\lambda t)u)^+}(\exp(\lambda S_n))\geq \exp(\lambda t)P_{(\exp(\lambda S_n)-\exp(\lambda t)u)^+} u \geq 0    
\label{eq1}
\end{equation}
Since Band projections are dominated by the identity map, it follows that  \begin{equation}\exp(\lambda S_n)\geq P_{(\exp(\lambda S_n)-\exp(\lambda t)u)^+}(\exp(\lambda S_n))   
\label{eq2}
\end{equation}
Combining (\ref{eq1}) with (\ref{eq2})  and applying $T$ we obtain :
\begin{equation}
TP_{(S_n-tu)^+} u \leq \exp(-\lambda t) T(\exp(\lambda S_n)) .
\label{eq3}
\end{equation}
Now using the first property of the exponential \ref{propexp} to move from a sum into a product  then using the independence of the Bernoulli process and  \ref{indep} we get :
$$\begin{aligned}
\exp(-\lambda t) T(\exp(\lambda S_n)) &=\exp(-\lambda t)T(\prod_{i=1}^n(\exp(\lambda P_i u))\\ 
&= \exp(-\lambda t)\prod_{i=1}^n(u+(\exp(\lambda)-1)f).
\end{aligned}$$
However since  $1+\hat{x}\leq \exp(\hat{x} )$ holds, for all $\hat{x} \in \mathcal{C}(X)$ it follows that   $1+x\leq \exp(x)$ holds for all $x\in E_u$. As a result we deduce that :
\[\exp(-\lambda t)\prod_{i=1}^n(u+(\exp(\lambda)-1)f)\leq \exp(-\lambda t)\exp(nf (\exp(\lambda)-1))\] 
Substuting this into (\ref{eq3}) we obtain
\[TP_{(S_n-tu)^+} u \leq \exp(-\lambda t)\exp(nf (\exp(\lambda)-1))\] 
This bound holds for any $\lambda > 0$, particularly for $\displaystyle \lambda = -\log\left(\frac{n\|f\|_u}{t}\right).$ This yields to \[TP_{(S_n-tu)^+} u \leq \left(\frac{n\|f\|_u}{t}\right)^t\exp\left(\frac{tf}{\|f\|_u}-nf\right).\]
Since $\displaystyle \frac{f}{\|f\|_u} \leq u$, it follows that  \[TP_{(S_n-tu)^+} u \leq \left(\frac{n\|f\|_u}{t}\right)^t\exp\left(tu-nf\right),\] which makes an end to our proof.
\end{proof}
\section{Bennett's inequality in Riesz Spaces}
In this section, we present another concentration inequality in Riesz space with unit: The Bennett's inequality. In this order we need to define the logarithm function in Riesz space.

Notice first that if $f$ is a positive invertible element in the Dedekind complete Riesz Space $E_u$ with unit $u$, then $\hat{f}$, its representant in $\mathcal{C}(X)$, is strictly positive. The following definition follows.

\begin{definition}
Let $E_u$ be a Dedekind complete Riesz space with a strong order unit $u$ . Define the logarithm function on Riesz space as follows:
For every positive invertible element $f$ in $E_u$ :\[\log(f)=\varphi^{-1} \circ \log \circ \varphi(f)\]
\end{definition}

The next proposition present some of the properties of the logarithm function on $E_u$. We leave the proof for the reader.
\begin{proposition}\label{logo}
Let $E_u$ be a Dedekind complete Riesz space with a strong order unit $u$ . The following statements holds:
\begin{enumerate}
\item For every positive invertible two elements $x$ and $y$ in $E_u$ \[\log(xy)=\log(x)+\log(y)\]
\item The inverse function of the exponential function on Riesz space is the logarithm function .
\item $x\longmapsto \log(u+x)$ is a concave function for all $x>-u$. 
\end{enumerate}
\end{proposition}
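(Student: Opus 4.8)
The plan is to transfer each statement along the Yoshida isomorphism $\varphi\colon E_u\to\mathcal C(X)$ and invoke the corresponding elementary facts about $\log$ on $\mathcal C(X)$, exactly in the spirit of the proofs of Proposition~\ref{propexp} and Proposition~\ref{band}. Throughout I write $\hat h=\varphi(h)$ for $h\in E_u$, and recall that if $h$ is positive and invertible then $\hat h$ is strictly positive on the compact space $X$, so $\log\hat h\in\mathcal C(X)$ is well defined and $\log h=\varphi^{-1}(\log\hat h)$ by the definition just given. I also use that $\varphi$ and $\varphi^{-1}$ are Riesz and algebra isomorphisms, hence in particular additive, multiplicative, and positive.

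For (1), since $\varphi$ is an algebra homomorphism, $\widehat{xy}=\hat x\,\hat y$, and both factors are strictly positive, so the scalar identity $\log(\hat x\hat y)=\log\hat x+\log\hat y$ holds pointwise on $X$; applying $\varphi^{-1}$ and using additivity gives $\log(xy)=\varphi^{-1}(\log\hat x)+\varphi^{-1}(\log\hat y)=\log x+\log y$. For (2), first note that $\exp(x)$ is positive and invertible for every $x\in E_u$ by Proposition~\ref{propexp}, so $\log(\exp(x))$ is defined; from $\exp=\varphi^{-1}\circ\exp\circ\varphi$ on $E_u$ we get $\varphi(\exp(x))=\exp(\hat x)$, whence
\[\log(\exp(x))=\varphi^{-1}\bigl(\log(\exp(\hat x))\bigr)=\varphi^{-1}(\hat x)=x,\]
using $\log\circ\exp=\mathrm{id}$ on $\mathcal C(X)$. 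Conversely, for $f$ positive invertible, $\hat f$ is strictly positive and $\exp(\log f)=\varphi^{-1}\bigl(\exp(\log\hat f)\bigr)=\varphi^{-1}(\hat f)=f$. Thus $\exp$ restricted to $E_u$ is a bijection onto the set of positive invertible elements, with inverse $\log$.

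For (3), I would first pin down the meaning of the statement: interpret ``$x>-u$'' as ``$u+x$ is positive and invertible'', which is precisely the domain on which $\log(u+x)$ makes sense, and say that a map $h$ on that set is concave if $h(sx+(1-s)y)\ge s\,h(x)+(1-s)\,h(y)$ for all $s\in[0,1]$ and all admissible $x,y$. One must check the domain is stable under convex combinations: if $u+x$ and $u+y$ are positive invertible then so is $u+sx+(1-s)y=s(u+x)+(1-s)(u+y)$ (strictly positive in $\mathcal C(X)$, being a convex combination of strictly positive functions), so $h(sx+(1-s)y)$ is defined. Now transport to $\mathcal C(X)$: $\varphi(h(x))=\log(1+\hat x)$, and by concavity of the real map $r\mapsto\log(1+r)$ on $(-1,\infty)$,
\[\log\bigl(1+s\hat x+(1-s)\hat y\bigr)\ \ge\ s\log(1+\hat x)+(1-s)\log(1+\hat y)\]
pointwise on $X$. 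Hence $\varphi\bigl(h(sx+(1-s)y)\bigr)-s\,\varphi(h(x))-(1-s)\,\varphi(h(y))\ge 0$ in $\mathcal C(X)$, and applying the positive isomorphism $\varphi^{-1}$ gives the inequality in $E_u$.

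The only genuinely delicate point is (3), and it is purely a matter of fixing definitions: one has to commit to a precise reading of ``concave'' and of ``$x>-u$'' in $E_u$, and verify the admissible domain is convex. Once that bookkeeping is in place, scalar concavity of $\log(1+r)$ plus order-preservation of $\varphi^{-1}$ finish it; parts (1) and (2) are routine transfers along $\varphi$.
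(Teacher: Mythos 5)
Your proof is correct, and it follows exactly the route the paper intends: the paper states Proposition~\ref{logo} with ``we leave the proof for the reader,'' and the expected argument is precisely your transfer along the Yoshida isomorphism $\varphi$, as in the proofs of Propositions~\ref{propexp} and~\ref{band}. Your extra care in part (3) --- fixing the meaning of ``$x>-u$'' as $u+x$ positive invertible, checking convexity of that domain, and then invoking pointwise concavity of $r\mapsto\log(1+r)$ together with positivity of $\varphi^{-1}$ --- supplies exactly the bookkeeping the paper glosses over.
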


The next technical lemma will play a key role in the proof of the main result of this section.

\begin{lemma}
Let $E_u$ be a Dedekind complete Riesz Space with unit $u$ and $T$ be a conditional expectation, then $T(\exp(f))$ is invertible for any $f$ in $E_u$.
\end{lemma}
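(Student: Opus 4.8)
The plan is to sandwich $\exp(f)$ between two strictly positive scalar multiples of $u$, carry these bounds through $T$, and then observe that any element of $E_u$ bounded below by a strictly positive multiple of $u$ is invertible.

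First I would pass to the Yoshida representation. Write $\hat{f}=\varphi(f)\in\mathcal{C}(X)$; since $\varphi$ is an isometry for the gauge norm with $\varphi(u)=\mathbb{1}$, we have $\|\hat{f}\|_\infty=\|f\|_u$, so
\[ e^{-\|f\|_u}\,\mathbb{1}\ \le\ \exp(\hat{f})\ \le\ e^{\|f\|_u}\,\mathbb{1} \]
pointwise on $X$. Applying $\varphi^{-1}$ and recalling $\exp(f)=\varphi^{-1}(\exp(\hat{f}))$ gives
\[ e^{-\|f\|_u}\,u\ \le\ \exp(f)\ \le\ e^{\|f\|_u}\,u . \]

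Next, applying the positive operator $T$ (which fixes $u$) to this chain yields
\[ e^{-\|f\|_u}\,u\ \le\ T(\exp(f))\ \le\ e^{\|f\|_u}\,u ; \]
in particular $T(\exp(f))\in E_u$ and it dominates the strictly positive element $e^{-\|f\|_u}u$. Finally, to conclude invertibility I would use the representation once more: if $g\in E_u$ satisfies $g\ge c\,u$ for some real $c>0$, then its representant $\hat{g}=\varphi(g)$ satisfies $\hat{g}\ge c\,\mathbb{1}>0$, hence $1/\hat{g}$ lies in $\mathcal{C}(X)$ and $\varphi^{-1}(1/\hat{g})$ is an inverse of $g$ in $E_u$ (using that $\varphi$ is an algebra isomorphism). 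Taking $g=T(\exp(f))$ and $c=e^{-\|f\|_u}$ completes the proof.

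I do not expect a genuine obstacle here; the only point to be mildly careful about is invoking the standing hypotheses on $T$ — we use nothing beyond positivity and $Tu=u$ (equivalently, that $T$ restricts to a positive unital map on $E_u$). A slightly different route is to avoid the lower bound and instead combine $\exp(f)\exp(-f)=u$ from Proposition \ref{propexp} with the upper bound applied to $-f$, but bounding $\exp(f)$ below by $e^{-\|f\|_u}u$ directly is the most economical.
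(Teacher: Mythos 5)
Your proof is correct and follows essentially the same route as the paper: bound $\exp(f)$ below by a strictly positive scalar multiple of $u$ via the Yoshida representation, push the bound through the positive operator $T$, and conclude invertibility from $T(\exp(f))\geq cu$ with $c>0$. The only cosmetic difference is that the paper cites Lemma~5.9 of Huijsmans--de Pagter for the last step, whereas you verify it directly by inverting the representant in $\mathcal{C}(X)$; your remark that only positivity and $Tu=u$ are used is also implicit in the paper's argument.
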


\begin{proof}
We will proceed once again by the way of representation. Pick $f$ in $E_u$ and let $\hat{f}$ be its representant in $\mathcal{C}(X)$ (see \ref{yos}). Since $X$ is compact, it follows that there is some $\alpha$ in $\mathbb{R}$, such that $\hat{f}\geq \alpha \mathbb{1}$. If we apply the exponential function on the last result, we get $\exp(\hat{f})\geq \exp(\alpha) \mathbb{1}$. Again with \ref{yos}, we obtain that \[\exp(f)\geq \exp(\alpha) u. \]
It follows that \[T\exp(f)\geq \exp(\alpha) u. \]
Lemma 5.9 in \cite{huijsmans1991lattice} yields to the desired result.
\end{proof}

\begin{lemma}\label{phi}
Let $E_u$ be a Dedekind complete Riesz space with a strong order unit $u$. Let $\Phi$ the map: \[\begin{array}{llcl}
\Phi : & E_u & \longrightarrow & E_u\\
 & f & \longmapsto & \exp(f) - f - u 
\end{array}\] For every element $f$ in $E_u$ such that $f\leq u$ the following inequality \[\Phi(tf)\leq f^2\Phi(tu)\]
 holds for every strictly positive real number $t$.
\end{lemma}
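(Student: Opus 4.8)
The plan is to push everything through the Yoshida representation (Theorem \ref{yos}) and reduce the statement to an elementary one–variable estimate, exactly in the spirit of Proposition \ref{band} and Lemma \ref{exband}. Writing $\hat f=\varphi(f)\in\mathcal{C}(X)$, the hypothesis $f\le u$ becomes $\hat f(s)\le 1$ for every $s\in X$; and since $\varphi$ is a Riesz and algebra isomorphism with $\exp=\varphi^{-1}\circ\exp\circ\varphi$, the element $\Phi(tf)$ corresponds to the function $s\mapsto e^{t\hat f(s)}-t\hat f(s)-1$ and $f^{2}\,\Phi(tu)$ to $s\mapsto \hat f(s)^{2}(e^{t}-t-1)$. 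Applying $\varphi^{-1}$ at the end, the inequality $\Phi(tf)\le f^{2}\,\Phi(tu)$ is therefore equivalent to the pointwise inequality $e^{t\hat f(s)}-t\hat f(s)-1\le \hat f(s)^{2}(e^{t}-t-1)$ on $X$, which itself follows from the scalar claim: for every real $a\le 1$ and every $t>0$,
\[
e^{ta}-ta-1\le a^{2}\,(e^{t}-t-1).
\]

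To prove the scalar claim I would introduce $g(x)=(e^{x}-1-x)/x^{2}$ for $x\ne 0$, extended by $g(0):=\tfrac12$ (the entire power series $\sum_{k\ge 0}x^{k}/(k+2)!$), and show that $g$ is nondecreasing on all of $\mathbb{R}$. Then, since $a\le 1$ and $t>0$ force $ta\le t$, monotonicity gives $g(ta)\le g(t)$; clearing the nonnegative denominator $t^{2}a^{2}$ produces exactly the claimed inequality, the case $a=0$ being the trivial identity $0\le 0$. For the monotonicity I would compute $g'(x)=\big((x-2)e^{x}+x+2\big)/x^{3}$ and study $\phi(x):=(x-2)e^{x}+x+2$: one checks $\phi(0)=0$, $\phi'(x)=(x-1)e^{x}+1$ with $\phi'(0)=0$, and $\phi''(x)=xe^{x}$, so $\phi'$ has a global minimum $0$ at the origin, hence $\phi'\ge 0$, hence $\phi$ is increasing and vanishes only at $0$; consequently $\phi(x)$ and $x^{3}$ have the same sign and $g'\ge 0$ everywhere.

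The last step is pure bookkeeping: for each $s\in X$ apply the scalar inequality with $a=\hat f(s)$, which is legitimate precisely because $\hat f(s)\le 1$, to obtain the inequality between the two elements $\exp(t\hat f)-t\hat f-\mathbb{1}$ and $\hat f^{2}(e^{t}-t-1)\mathbb{1}$ of $\mathcal{C}(X)$, and then pull back by the order isomorphism $\varphi^{-1}$ to land in $E_{u}$. The only genuinely nontrivial ingredient is the scalar monotonicity of $g$; everything else is transport along $\varphi$. (One could instead try to compare the power series $\sum_k x^k/(k+2)!$ at $x=ta$ and $x=t$ coefficientwise, but the sign of $x^{k}$ for negative $a$ makes that comparison fail termwise, so the derivative argument is the clean route.)
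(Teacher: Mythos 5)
Your proposal is correct and follows essentially the same route as the paper: reduce via the Yoshida representation $\varphi$ to the scalar inequality $e^{ta}-ta-1\le a^{2}(e^{t}-t-1)$ for $a\le 1$, $t>0$, and pull back with $\varphi^{-1}$. The only difference is that the paper dismisses this scalar step as ``a straightforward calculus,'' whereas you actually supply it (correctly) via the monotonicity of $x\mapsto (e^{x}-1-x)/x^{2}$, which is a welcome filling-in of the omitted detail.
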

\begin{proof}
We will proceed again by the way of representation. Notice that $\varphi\left( \Phi(tf)\right)=\Phi(t\hat{f})$. A straightforward calculus yields to $\Phi(t\hat{f}) \leq \hat{f}^2 \Phi(t)$. The result follows by applying $\varphi^{-1}$.
\end{proof}

At this point we are able to prove the main result of this section
\begin{theorem}[Bennett's inequaliy]
Let $E_u$ be a Dedekind complete Riesz space with a strong unit $u$ and $T$ a strictly positive conditional expectation on $E_u$. Let $f_1,\dots ,f_n$ be $n$ independent elements in $E_u$.
Let \[S=\sum_{k=1}^n (f_i - T(f_i))\]
and \[ v =\sum_{k=1}^n T(f_i ^2)\]

Then, for any real $t>0$, we get \[\Psi_S(t) := \log(T(\exp(tS)))\leq v \Phi(t)\]
 
 and if $v$ is invertible then for all positive element $x$ in $E_u$, we have \[TP_{(S-xu)^+} u \leq  \exp[-\|v\|_u[(1+\frac{x}{\|v\|_u})\log(1+\frac{x}{\|v\|_u})-\frac{x}{\|v\|_u}]]\]
\end{theorem}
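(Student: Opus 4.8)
The plan is to transcribe the classical Cramér–Chernoff argument for Bennett's inequality into $E_u$, using Lemma \ref{phi} for the per‑coordinate bound, Theorem \ref{ext} for tensorisation, and the Yoshida representation to import the scalar facts $1+y\le e^{y}$ and $\log(1+y)\le y$ and to carry out the final optimisation pointwise on $X$.

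\emph{Step 1 (the logarithmic moment bound).} Put $g_i=f_i-T(f_i)$, so $T(g_i)=0$; since each $g_i$ lies in the closed Riesz subspace generated by $f_i$ and $R(T)$, the family $(g_i)$ inherits $T$‑independence and disjoint partial sums of the $g_i$ remain $T$‑independent. From $\exp(tg_i)=u+tg_i+\Phi(tg_i)$ and $T(g_i)=0$ we get $T(\exp(tg_i))=u+T(\Phi(tg_i))$. Assuming, as is needed for Lemma \ref{phi}, that $f_i-T(f_i)\le u$, that lemma gives $\Phi(tg_i)\le g_i^{2}\,\Phi(tu)$; since $\Phi(tu)=(e^{t}-1-t)u\ge 0$ and, by the averaging property of $T$, $T(g_i^{2})=T(f_i^{2})-(Tf_i)^{2}\le T(f_i^{2})$, applying $T$ yields
\[ T(\exp(tg_i))\le u+\Phi(tu)\,T(f_i^{2})\le\exp\!\big(\Phi(tu)\,T(f_i^{2})\big), \]
the last step being $u+w\le\exp(w)$, already established in the Chernoff section. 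Iterating Theorem \ref{ext} and using that $E_u$ is commutative,
\[ T(\exp(tS))=\prod_{i=1}^{n}T(\exp(tg_i))\le\prod_{i=1}^{n}\exp\!\big(\Phi(tu)T(f_i^{2})\big)=\exp\!\big(v\,\Phi(tu)\big). \]
Because $T(\exp(tS))$ is invertible (preceding lemma) and $\log$ is order‑preserving on positive invertible elements and inverts $\exp$ (Proposition \ref{logo}), applying $\log$ gives $\Psi_S(t)=\log(T(\exp(tS)))\le v\,\Phi(tu)$, which is the first assertion.

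\emph{Step 2 (the tail bound).} Fix a scalar $\lambda>0$ and set $P=P_{(S-xu)^{+}}$. Running the Chernoff argument of the previous section verbatim — replace the band of $(S-xu)^{+}$ by that of $(\exp(\lambda S)-\exp(\lambda xu))^{+}$ via Lemma \ref{exband}, use $P\exp(\lambda S)\ge P\exp(\lambda xu)$, the localisation identity $P(ab)=(Pa)b$ for band projections in the $f$‑algebra $E_u$, and $\exp(\lambda xu)=e^{\lambda x}u\in R(T)$ — one obtains
\[ T(Pu)\le\exp(-\lambda x)\,T(\exp(\lambda S))\le\exp\!\big(v\,\Phi(\lambda u)-\lambda xu\big)=\exp\!\big((e^{\lambda}-1-\lambda)v-\lambda xu\big). \]
It remains to optimise over $\lambda>0$. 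Passing to $C(X)$ this reads $\widehat{T(Pu)}(s)\le\exp\!\big((e^{\lambda}-1-\lambda)\hat v(s)-\lambda x\big)$ for every $s\in X$; choosing the scalar $\lambda=\log\!\big(1+x/\|v\|_u\big)$ and using $\hat v(s)\le\|v\|_u$ together with $\log(1+y)\le y$, a direct computation shows the exponent is $\le-\|v\|_u\,h\!\big(x/\|v\|_u\big)$, where $h(y)=(1+y)\log(1+y)-y$. Applying $\varphi^{-1}$ gives the stated inequality.

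\emph{Where the work is.} The steps resting on Lemma \ref{phi}, Theorem \ref{ext}, and the Chernoff machinery are essentially routine. The one genuinely delicate point is the optimisation: in the scalar Bennett bound the optimal $\lambda$ depends on the variance, which here is the \emph{element} $v$, so no single scalar $\lambda$ can be pointwise optimal on $X$. The fix is to optimise using the scalar $\|v\|_u$ in place of $v$ and to absorb the pointwise loss via the elementary inequality $(\|v\|_u-\hat v(s))\big(\log(1+x/\|v\|_u)-x/\|v\|_u\big)\le 0$, valid since $\hat v\le\|v\|_u$ and $\log(1+\cdot)$ lies below the identity — this is precisely why $\|v\|_u$, rather than $v$, appears in the final bound. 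A secondary point is that one needs $\exp(-\lambda x)$ to commute through $T$ in Step 2, which is immediate when $x$ is a positive multiple of $u$ (or lies in $R(T)$); a completely general positive $x\in E_u$ would require separate treatment here.
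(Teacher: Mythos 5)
Your proposal is correct and follows essentially the same Cram\'er--Chernoff route as the paper: a per-coordinate bound via Lemma \ref{phi}, factorisation of the moment generating function via Theorem \ref{ext}, the Chernoff band-projection step, and the optimisation at $\lambda=\log\left(1+\frac{x}{\|v\|_u}\right)$ carried out in $C(X)$ using $\hat v\le\|v\|_u$. The only deviations are minor: you apply Lemma \ref{phi} to the centred elements $f_i-T(f_i)$ (hence your hypothesis $f_i-T(f_i)\le u$) where the paper bounds $\exp(tf_i)$ directly via $\exp(tf_i)\le u+tf_i+(e^t-t-1)f_i^2$ (which equally needs $f_i\le u$) and then removes $\exp(-tT(f_i))$ by the averaging property and the concavity of $\log(u+\cdot)$; your explicit flagging of this boundedness assumption, omitted from the theorem statement but genuinely required by both arguments, is warranted, as is your observation that $x$ is effectively a scalar in the tail bound.
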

\begin{proof}
In order to prove Bennett's inequality, we begin by proving that $\psi_S(t)$ is bounded.We have : 
$$\begin{aligned}
\Psi_S(t)&=\log(T\exp(t\sum_{k=1}^n (f_i - T(f_i))  ))\\
&=\log(T(\prod_{k=1}^n \exp(t(f_i - T(f_i)) ) ).
\end{aligned}$$ The last inequality is a consequence of the first property from proposition (\ref{propexp}).

Note that $f_1,..,f_n$ are independent and   $<f_i,R(T)> = <f_i-T(f_i),R(T)>$ for all $i\in\lbrace 1,..,n \rbrace$, so that $f_1-T(f_1),..,f_n-T(f_n)$ are independent. Thus from theorem (\ref{ext}) we obtain:
 $$\begin{aligned}
\Psi_S(t)&=\log[\prod_{k=1}^n T(\exp(t(f_i - T(f_i)) ) ]\\
&=\log[\prod_{k=1}^n T(\exp(t(f_i))(\exp( - tT(f_i)) ) ].
\end{aligned}$$
We next claim, as a consequence of the first property from lemma (\ref{logo}), that:
 \[\log[\prod_{k=1}^n T(\exp(t(f_i))(\exp( - tT(f_i)) ) ]=\sum_{k=1}^n\log[ T(\exp(t(f_i))\exp( - tT(f_i)) ) ].\]
Recall that the range of T is order closed so that if an element belongs to $R(T)$ then its exponential belongs to it as well. Hence the averaging property leads to: \[T[\exp(t(f_i))\exp( - tT(f_i))]=\exp( - tT(f_i))T(\exp(t(f_i)).\]  Moreover, the second property of lemma  (\ref{logo}) implies:  \[\log[\exp( - tT(f_i))]= -tT(f_i),\forall i\in {1,..,n}.\]  Consequently: 
$$\begin{aligned}
\Psi_S(t)&=\sum_{k=1}^n\log[T(\exp(t(f_i))\exp( - tT(f_i)) ]\\
&=\sum_{k=1}^n\log[T(\exp(t(f_i))\exp( - tT(f_i)) ]\\
&=\sum_{k=1}^n\log[T(\exp(t(f_i))]- tT(f_i) .
\end{aligned}$$
As $\exp(tf_i)\leq u + tf_i + (e^t -t -1)f_i^2 ,\forall i\in{1,..,n}$, T is strictly positive and the logarithm is an increasing  function,we have:
\[\log(T(exp(tf_i)))\leq\log[u+tT(f_i) +(e^t-t-1)T(f_i^2)]\]
Hence that:
\[\Psi_S(t)\leq\sum_{k=1}^n\log[u+tT(f_i) +(e^t-t-1)T(f_i^2)]-tT(f_i)\]
Finally as  the function $x\longrightarrow \log(u+x)$ is concave for all $x>-u$ we conclude that :
$$\begin{aligned}
\Psi_S(t)&\leq n(\log[u+\frac{t}{n}\sum_{i=1}^n T(f_i) + (e^t-t-1)\frac{v}{n}]) - t\sum_{i=1}^n T(f_i)\\
&\leq(e^t-t-1)v. 
\end{aligned}$$
We can now proceed analogously to the proof of Chernoff inequality, so that   for all $x >0$ :
$$\begin{aligned}
TP_{(S-xu)^+} u &\leq \exp(-tx) T(\exp(tS))\\
&= \exp(-tx)\exp(\psi_S(t))\\
&=\exp(-tx+\psi_S(t))\\
&\leq\exp(-tx+v(e^t-t-1)).
\end{aligned}$$
The proof is completed by showing that: \[\exp(-tx+v(e^t-t-1))\leq \exp[-\|v\|_u[(1+\frac{x}{\|v\|_u})\log(1+\frac{x}{\|v\|_u})-\frac{x}{\|v\|_u}]]\]
We see that the inequality $\varphi(TP_{(S-ux)^+} u) \leq \exp(-tx) \exp(\hat{v(s)} (e^t -t -1))$  holds for all $s$ in $C(X)$ which is clear as $\varphi$  is monotonous. In particular it holds for $\|\hat{v}\|_{\infty}$.\\
The right bound side is optimized for $t=\log(1+\frac{x}{\|\hat{v}\|_{\infty}})$ which is well defined, because $v$ is invertible therefore  $\|\hat{v}\|_{\infty} $ is non null.
It follows that : 
\[\varphi(TP_{(S-ux)^+} u) \leq \exp[-\|\hat{v}\|_{\infty}[(1+\frac{x}{\|\hat{v}\|_{\infty}})\log(1+\frac{x}{\|\hat{v}\|_{\infty}})-\frac{x}{\|\hat{v}\|_{\infty}}]]\]
Similarly, the monotony of  $\varphi^{-1}$  gives the desired bound and complete the proof.

\end{proof}
\section{Hoeffding's inequality in Riesz spaces}
\begin{definition}
Let $E_u$ be a Dedekind complete Riesz space with a strong unit $u$ and $T$ a strictly positive conditional expectation on $E_u$. An element $X$ in $E_u$ is called subGaussian with parameter $v$ where $v$ is an invertible element of $E_u$ , if for all $\lambda \in \mathbb{R}$ if \[\Psi_{X-T(X)} (\lambda)\leq \frac{\lambda^2}{2} v \]
\end{definition}
\begin{proposition}\label{subgau}
Let $E_u$ be a Dedekind complete Riesz space with a strong unit $u$ and $T$ a strictly positive conditional expectation on $E_u$. If X is subGaussian with parameter $v$ then  for all $\lambda \in\mathbb{R}$ we have: \[ TP_{((X-T(X))-tu)^+} u \leq \exp(-\frac{\lambda^2}{2\|v\|_u})u\]
\end{proposition}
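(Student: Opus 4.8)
The plan is to run the same Chernoff-type scheme used for the previous two inequalities, replacing the explicit Bernoulli/Bennett computation of the moment generating function by the subGaussian hypothesis. Write $S:=X-T(X)\in E_u$ and fix a scalar $\lambda>0$. By Lemma \ref{exband} the projection band generated by $(S-tu)^+$ coincides with the one generated by $(\exp(\lambda S)-\exp(\lambda t)u)^+$, so $P:=P_{((X-T(X))-tu)^+}$ is exactly the band projection onto the latter band. Exactly as in the proof of Chernoff's inequality, from $P(\exp(\lambda S)-\exp(\lambda t)u)\ge 0$ one gets $\exp(\lambda t)\,Pu\le P\exp(\lambda S)\le \exp(\lambda S)$, using $\exp(\lambda S)\ge 0$ (Proposition \ref{propexp}(2)) and that band projections are dominated by the identity; applying $T$ and dividing by the positive scalar $\exp(\lambda t)$ yields
\[ TPu\le \exp(-\lambda t)\,T(\exp(\lambda S)). \]

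Next I would bring in the subGaussian bound. Since $T(\exp(\lambda S))$ is invertible (shown earlier), $\Psi_S(\lambda)=\log(T(\exp(\lambda S)))$ is defined and $\exp(\Psi_S(\lambda))=T(\exp(\lambda S))$ by Proposition \ref{logo}(2). The hypothesis gives $\Psi_S(\lambda)\le \tfrac{\lambda^2}{2}v$ in $E_u$; applying the exponential — which is order preserving on $E_u$, being $\varphi^{-1}\circ\exp\circ\varphi$ with $\varphi,\varphi^{-1}$ positive and $\exp$ monotone on $C(X)$ — gives $T(\exp(\lambda S))\le \exp\!\bigl(\tfrac{\lambda^2}{2}v\bigr)$. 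Hence
\[ TPu\le \exp(-\lambda t)\exp\!\Bigl(\tfrac{\lambda^2}{2}v\Bigr)=\exp\!\Bigl(-\lambda t\,u+\tfrac{\lambda^2}{2}v\Bigr). \]
Using $v\le |v|\le \|v\|_u\,u$ and monotonicity of $\exp$ once more, together with $\exp(cu)=e^{c}\,u$ for scalars $c$, this becomes $TPu\le \exp\!\bigl(-\lambda t+\tfrac{\lambda^2}{2}\|v\|_u\bigr)u$.

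Finally, the bound holds for every $\lambda>0$, so I would minimize the scalar $g(\lambda)=-\lambda t+\tfrac{\lambda^2}{2}\|v\|_u$; its minimum is attained at $\lambda=t/\|v\|_u$ (well defined since $v$ is invertible, hence $\|v\|_u\neq 0$) with value $-t^2/(2\|v\|_u)$, giving $TP_{((X-T(X))-tu)^+}u\le \exp\!\bigl(-t^2/(2\|v\|_u)\bigr)u$, which is the asserted inequality. There is no serious obstacle here: the scheme is identical to the Chernoff and Bennett proofs, and the only points requiring care are (i) recording that $\exp$ is order preserving on $E_u$ so that $\Psi_S(\lambda)\le \tfrac{\lambda^2}{2}v\le \tfrac{\lambda^2}{2}\|v\|_u u$ can be exponentiated, and (ii) keeping $\lambda>0$ throughout so that Lemma \ref{exband} applies and the division by $\exp(\lambda t)$ is legitimate; both ingredients are already available from Proposition \ref{propexp}, Proposition \ref{logo}, Lemma \ref{exband}, and the elementary theory of band projections.
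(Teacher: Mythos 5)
Your proof is correct and follows essentially the same route as the paper: the Chernoff technique via Lemma \ref{exband} and the band projection estimate, then the subGaussian bound on $\Psi_{X-T(X)}(\lambda)$, and finally optimization over the free parameter at $\lambda=t/\|v\|_u$. The only cosmetic difference is that you stay in $E_u$ using $v\le\|v\|_u u$ and $\exp(cu)=e^c u$, while the paper passes through the Yoshida representation and $\|\hat{v}\|_\infty$ before pulling back with $\varphi^{-1}$; since $\varphi$ is an isometry these are the same estimate.
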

\begin{proof}
It is a simple matter -using the Chernoff technique-  to show that for any strictly positive number $s$, we have  \[TP_{(X-T(X))-tu)^+} u \leq \exp(-st)\exp(\psi_{X-T(X)}(s))\]  
Next we use the fact that $X$ is subGaussian with parameter $v$ to get \[TP_{(X-T(X))-tu)^+} u \leq \exp(-st)\exp(\frac{s^2}{2}v)\]
We can now use Yoshida representation and the techniques  used in previous theorems to show that  \[\varphi ( TP_{((X-T(X))-tu)^+} u) \leq \exp(-st+\frac{s^2}{2}\|\hat{v}\|_{\infty} )\]
We wish to make the inequality the tightest possible , thus we minimize with respect to $s>0$ solving $\Phi'(s)=0$, where $\Phi(s)=-st+\frac{s^2}{2}\|\hat{v}\|_{\infty}.$
 
 We find that $\inf\Phi(s)=-\frac{t^2}{2\|v\|_{\infty}}$.

 This proves that  \[\varphi ( TP_{((X-T(X))-tu)^+} u) \leq \exp(-\frac{t^2}{2\|v\|_{\infty}})\]
Hence \[ TP_{((X-T(X))-tu)^+} u \leq \exp(-\frac{t^2}{2\|v\|_u}) u.\] wish is the desired inequality.
\end{proof}
\begin{theorem}
Let $E_u$ be a Dedekind complete Riesz space with a strong unit $u$ and $T$ a strictly positive conditional expectation on $E_u$ with $Tu=u$ . Let  $X_1,..,X_n$ be $n$ T-independent element of $E_u$ such that $X_i$ is subGaussian with parameter $v_i$ for all $i \in {1,..,n} $. Then, for any strictly positive scalar $t$ we have :
\[TP_{(\sum_{i=1}^n (X_i-T(X_i)) - tu)^+} u \leq \exp(-\frac{t^2}{2\sum_{i=1}^n\|v_i\|_u})u\]
\end{theorem}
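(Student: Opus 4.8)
The plan is to run the Cramér--Chernoff argument already used for Bennett's inequality, now fed by the subGaussian hypothesis. Write $S=\sum_{i=1}^{n}(X_i-T(X_i))$ and $v=\sum_{i=1}^{n}v_i$; each $v_i$ is positive and invertible, hence so is $v$, and in particular $W:=\sum_{i=1}^{n}\|v_i\|_u>0$. Since $X_1,\dots,X_n$ are $T$-independent and $<X_i,R(T)>=<X_i-T(X_i),R(T)>$ for every $i$, the centered elements $X_i-T(X_i)$ are again $T$-independent, so Theorem \ref{ext} applied inductively gives $T(\exp(\lambda S))=\prod_{i=1}^{n}T(\exp(\lambda(X_i-T(X_i))))$ for every real $\lambda$. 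Taking logarithms and reproducing the computation from the proof of Bennett's inequality --- using the first property of Proposition \ref{logo}, the order closedness of $R(T)$ (so that exponentials of elements of $R(T)$ stay in $R(T)$), and the averaging property $T(\exp(\lambda f)\exp(-\lambda T(f)))=\exp(-\lambda T(f))\,T(\exp(\lambda f))$ --- one obtains the additivity $\Psi_S(\lambda)=\sum_{i=1}^{n}\Psi_{X_i-T(X_i)}(\lambda)$.

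I would then insert the subGaussian bound term by term: $\Psi_S(\lambda)\le\sum_{i=1}^{n}\frac{\lambda^2}{2}v_i=\frac{\lambda^2}{2}v$. Because $\exp$ is increasing on $E_u$ and $v\le\|v\|_u u\le Wu$, this gives $T(\exp(\lambda S))=\exp(\Psi_S(\lambda))\le\exp(\frac{\lambda^2}{2}Wu)=\exp(\frac{\lambda^2}{2}W)u$ for $\lambda>0$. Next I would carry out the truncation step exactly as in the Chernoff and Bennett sections: since $P_{(\exp(\lambda S)-\exp(\lambda t)u)^{+}}$ projects onto the band generated by $(\exp(\lambda S)-\exp(\lambda t)u)^{+}$ and band projections are dominated by the identity, one gets $TP_{(S-tu)^{+}}u\le\exp(-\lambda t)\,T(\exp(\lambda S))$, Lemma \ref{exband} being what lets us pass from the band of $(S-tu)^{+}$ to that of $(\exp(\lambda S)-\exp(\lambda t)u)^{+}$. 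Combining the two displays gives $TP_{(S-tu)^{+}}u\le\exp(-\lambda t+\frac{\lambda^2}{2}W)u$ for every $\lambda>0$, and the elementary scalar minimization of $\lambda\mapsto-\lambda t+\frac{\lambda^2}{2}W$ at $\lambda=t/W$ (value $-t^2/(2W)$) yields $TP_{(S-tu)^{+}}u\le\exp(-\frac{t^2}{2W})u=\exp(-\frac{t^2}{2\sum_{i=1}^{n}\|v_i\|_u})u$. If one prefers to stay in the style of the earlier sections, the optimization can instead be performed inside $C(X)$ after applying $\varphi$ and then transported back through the monotone map $\varphi^{-1}$.

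As in the Bennett proof, the one genuinely delicate ingredient is the additivity $\Psi_S(\lambda)=\sum_i\Psi_{X_i-T(X_i)}(\lambda)$, which depends on the order closedness of $R(T)$ and on the averaging property of $T$; everything else is a routine transcription of the Chernoff/Bennett manipulations together with the scalar optimization. I do not anticipate any further obstacle, since the case $n=1$ is essentially Proposition \ref{subgau}.
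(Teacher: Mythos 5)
Your proposal is correct and follows essentially the same route as the paper: Chernoff truncation via Lemma \ref{exband}, factorization of $T(\exp(\lambda S))$ by Theorem \ref{ext} using the $T$-independence of the centered elements, the subGaussian bound applied termwise, and scalar optimization at $\lambda=t/\sum_i\|v_i\|_u$. The only cosmetic difference is that you carry out the optimization directly in $E_u$ via $v\le\bigl(\sum_i\|v_i\|_u\bigr)u$, whereas the paper transports the inequality to $C(X)$ through $\varphi$ as in Proposition \ref{subgau}; both yield the stated bound.
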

\begin{proof}
Let us first use Chernoff's technique thus the following inequality holds for all $\lambda>0$ \[TP_{(\sum_{i=1}^n Y_i - tu)^+} u \leq \exp(-\lambda t)T(\exp(\lambda \sum_{i=1}^n Y_i))\] where $Y_i=X_i-T(X_i)$

Since $Y_1,..,Y_n$ are T-independent and by lemma (\ref{ext}) applied iteratively, we show that \[T(\exp(\lambda \sum_{i=1}^n Y_i)) = \prod_{i=1}^n T(\exp(\lambda Y_i))\]
But, for all $i\in{1,..,n}$  \[ \log(T(\exp(\lambda Y_i))) \leq \frac{\lambda^2}{2}v_i\] because $Y_i$ is subGausian with parameter $v_i$.

Consequently we get: \[ \sum_{i=1}^n\log(T(\exp(\lambda Y_i))) \leq \frac{\lambda^2}{2}\sum_{i=1}^n v_i\]  So that \[TP_{(\sum_{i=1}^n Y_i - tu)^+} u\leq\exp(-\lambda t)\exp(\frac{\lambda^2}{2}\sum_{i=1}^n v_i)\]
The proof is completed by minimizing the right part of the inequality proceeding the same way as in the proof of (\ref{subgau}). 
\end{proof}
\begin{lemma}
Let $E_u$ be a Dedekind complete Riesz space with a strong unit $u$ and $T$ a strictly positive conditional expectation on $E_u$. For $X$in $[au,bu]$ where $a$ and $b$ are two scalars. Then for any $\lambda$in $\mathbb{R}$ we have:
\[\Psi_{X-T(X)}(\lambda) \leq \frac{\lambda^2(b-a)^2}{8}u.\]
\end{lemma}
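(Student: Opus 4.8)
The plan is to transport the classical Hoeffding lemma into $E_u$ through the Yoshida isomorphism $\varphi:E_u\to\mathcal{C}(X)$ of Theorem \ref{yos}, in the same style as the proofs of Chernoff's and Bennett's inequalities above. We may assume $a<b$ (if $a=b$ then $X=au$, so $X-T(X)=0$ and the inequality is trivial), and we recall $Tu=u$. \emph{Step 1 (a convexity bound for $\exp(\lambda X)$).} From $au\le X\le bu$ the representant $\hat X=\varphi(X)$ satisfies $a\le\hat X(t)\le b$ at every point $t$ of the representing compact space, so convexity of $\xi\mapsto e^{\lambda\xi}$ gives, pointwise,
\[ e^{\lambda\hat X(t)}\le\frac{(b-\hat X(t))e^{\lambda a}+(\hat X(t)-a)e^{\lambda b}}{b-a}. \]
Applying $\varphi^{-1}$ (a Riesz and algebra isomorphism which intertwines the two $\exp$ maps) gives $\exp(\lambda X)\le\frac{1}{b-a}\bigl(e^{\lambda a}(bu-X)+e^{\lambda b}(X-au)\bigr)$ in $E_u$, and since $T$ is positive, linear and $Tu=u$,
\[ T(\exp(\lambda X))\le\frac{1}{b-a}\bigl(e^{\lambda a}(bu-T(X))+e^{\lambda b}(T(X)-au)\bigr). \]

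\emph{Step 2 (rewriting $\Psi_{X-T(X)}(\lambda)$).} By Proposition \ref{propexp}(1), $\exp(\lambda(X-T(X)))=\exp(\lambda X)\exp(-\lambda T(X))$. Since $T(X)\in\mathcal{R}(T)$ and, as already used in the proof of Bennett's inequality, $\mathcal{R}(T)$ is $\exp$-stable, the factor $\exp(-\lambda T(X))$ lies in $\mathcal{R}(T)$, so the averaging property yields $T(\exp(\lambda(X-T(X))))=\exp(-\lambda T(X))\,T(\exp(\lambda X))$. Both factors on the right are positive and invertible (for $T(\exp(\lambda X))$ this is the lemma asserting invertibility of $T(\exp(f))$), so $\Psi_{X-T(X)}(\lambda)$ is well defined, and Proposition \ref{logo}(1)--(2) turn it into $-\lambda T(X)+\log(T(\exp(\lambda X)))$. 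The right-hand side of Step 1 is a convex combination of $e^{\lambda a}u$ and $e^{\lambda b}u$, hence positive invertible, so monotonicity of $\log$ on positive invertible elements gives
\[ \Psi_{X-T(X)}(\lambda)\le-\lambda T(X)+\log\!\Bigl(\tfrac{1}{b-a}\bigl(e^{\lambda a}(bu-T(X))+e^{\lambda b}(T(X)-au)\bigr)\Bigr). \]

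\emph{Step 3 (the scalar estimate).} Passing to representants once more, the right-hand side evaluated at a point $t$ equals $L(\lambda(b-a))$, where $q=q(t)=(\widehat{T(X)}(t)-a)/(b-a)\in[0,1]$ (legitimate since $au\le T(X)\le bu$) and $L(h)=-qh+\log(1-q+qe^{h})$. One checks $L(0)=L'(0)=0$ and $L''(h)=s(1-s)\le\tfrac14$ with $s=qe^{h}/(1-q+qe^{h})$, whence $L(h)\le h^2/8$ by Taylor's formula. Thus the representant of the right-hand side is $\le\lambda^2(b-a)^2/8$ at every point, i.e. the element itself is $\le\frac{\lambda^2(b-a)^2}{8}u$, and the lemma follows.

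The only genuine computation is the scalar Hoeffding estimate $L(h)\le h^2/8$, which is textbook; everything else is routine transfer through $\varphi$. The points meriting a word of care are that each application of $\log$ is to a positive invertible element and that $\mathcal{R}(T)$ is stable under $\exp$ — but the latter was already taken for granted in the Bennett argument, so I would simply reuse it there.
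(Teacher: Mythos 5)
Your proof is correct, and it differs from the paper's in a way worth noting. The paper also passes through the Yoshida representation and the scalar estimate $L(h)\le h^2/8$, but it centers first: it applies the convexity bound directly to $\hat Y$ with $Y=X-T(X)$, using the fixed endpoints $a,b$ and the fixed parameter $p=-a/(b-a)$, and then invokes $\tilde T(\hat Y)=0$ to identify the resulting convex combination with $\exp(L(h))$. That step is only literally valid when $T(X)$ is a scalar multiple of $u$ (in general $Y$ lies in $[au-T(X),bu-T(X)]$, not $[au,bu]$, and the ``mean'' entering $p$ is an element of $E_u$, not a constant), so the paper's argument is written as if $T(X)$ were a scalar. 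Your decomposition avoids this: you bound $\exp(\lambda X)$ (whose bounds $a,b$ genuinely are scalars), peel off $\exp(-\lambda T(X))$ through the $\exp$-stability of $\mathcal{R}(T)$, the averaging property and Proposition \ref{logo}, and only then go pointwise, letting the parameter $q(t)=(\widehat{T(X)}(t)-a)/(b-a)\in[0,1]$ vary with the point while the Taylor bound $L(h)\le h^2/8$ holds uniformly in $q$. This buys a proof that is rigorous exactly where the paper's is loose, at the cost of a slightly longer bookkeeping step; it also makes explicit the hypotheses actually used ($Tu=u$, invertibility of $T(\exp(\lambda X))$, monotonicity of $\log$ on positive invertible elements), all of which are available from the paper's earlier lemmas. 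The only caveat is that $Tu=u$ is not stated in the lemma itself, but both your argument and the paper's final transfer step need it, so stating it, as you do, is the right call.
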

\begin{proof}
The main idea of the proof is to use Yoshida representation. First, note that $\varphi\circ T\circ\varphi^{-1}$ is a positive linear  continuous operator because $\varphi$ is an isometric function and $T$ is a positive linear order continuous operator.

let $\tilde{T} = \varphi\circ T\circ\varphi^{-1}$ so we have $\varphi\circ\Psi_{X-T(X)}(\lambda) = \log\tilde{T}(\exp(\varphi(\lambda Y)$ where  $Y=X-T(X)$.

Now using the technique of the classical case we have:

$\exp(\lambda\varphi(Y))$ is a convex function of $\varphi(Y)$, so that: \[\exp(\lambda\varphi(Y))\leq\frac{b-\hat{Y}}{b-a}\exp(\lambda a) + \frac{\hat{Y}-a}{b-a}\exp(\lambda b)\]
Hence \[\tilde{T}(\exp(\lambda\varphi(Y)))\leq \frac{b-\tilde{T}(\hat{Y})}{b-a}\exp(\lambda a) + \frac{\tilde{T}(\hat{Y})-a}{b-a}\exp(\lambda b)\]
let $h =\lambda (b-a)$ , $p=\frac{-a}{b-a}$ and $L(h)=-hp+\log(1-p+p\exp(h))$
Using the fact that $\tilde{T}(\hat{Y}) = 0$ to get \[ \frac{b-\tilde{T}(\hat{Y})}{b-a}\exp(\lambda a) + \frac{\tilde{T}(\hat{Y})-a}{b-a}\exp(\lambda b) = \exp(L(h))\]
Since $L(0) =L'(0)=0$ and $L''(h)\leq \frac{1}{4}$           for all $h$ 

by Taylor expansion we get, \[ L(h)\leq \frac{1}{8} \lambda^2 (b-a)^2\]

Hence \[\tilde{T}(\exp(\lambda\varphi(Y))\leq \exp(\frac{1}{8} \lambda^2 (b-a)^2)\]

Finally, \[T\circ\varphi^{-1}\circ\exp\circ\varphi(\lambda Y)\leq \varphi^{-1} \circ \exp(\frac{1}{8} \lambda^2 (b-a)^2)\varphi(u)\]

But, \[\varphi^{-1} \circ \exp(\frac{1}{8} \lambda^2 (b-a)^2)\varphi(u)= \varphi^{-1} \circ \exp\circ\varphi(\frac{1}{8} \lambda^2 (b-a)^2)u.\]  Which makes an end to our proof.

\end{proof}
\begin{corollary}
Let $E_u$ be a Dedekind complete Riesz space with a strong unit $u$ and $T$ a strictly positive conditional expectation on $E_u$ with $Tu=u$. Let  $X_1,..,X_n$ be $n$ T-independent element of $E_u$ such that $X_i$ in $[a_iu,b_iu]$ for all $1\leq i\leq n$, where $a_i,b_i$ are two different scalars. Then, for any strictly positive scalar $t$ we have:
\[TP_{(\sum_{i=1}^n (X_i-T(X_i)) - tu)^+} u \leq \exp(-\frac{2t^2}{\sum_{i=1}^n(b_i-a_i)^2})u\]
\end{corollary}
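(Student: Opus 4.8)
The plan is to read this corollary as an immediate specialisation of the subGaussian Hoeffding theorem proved above, fed by the bounded-element lemma that precedes it. First I would observe that that lemma says precisely that for each $i$, since $X_i \in [a_i u, b_i u]$, we have
\[
\Psi_{X_i - T(X_i)}(\lambda) \le \frac{\lambda^2 (b_i - a_i)^2}{8}\,u = \frac{\lambda^2}{2}\,v_i, \qquad \text{where } v_i := \frac{(b_i - a_i)^2}{4}\,u ,
\]
for every $\lambda \in \mathbb{R}$. Because $a_i \ne b_i$, the scalar $(b_i - a_i)^2$ is strictly positive, so $v_i$ is a strictly positive multiple of $u$ and hence an invertible element of $E_u$. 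Thus each $X_i$ is subGaussian with parameter $v_i$ in the sense of the definition given above.

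Next I would invoke the preceding theorem (Hoeffding's inequality for subGaussian, $T$-independent summands) applied to the family $X_1, \dots, X_n$ with parameters $v_1, \dots, v_n$. This yields, for every strictly positive scalar $t$,
\[
TP_{\left(\sum_{i=1}^n (X_i - T(X_i)) - tu\right)^+} u \le \exp\!\left(-\frac{t^2}{2\sum_{i=1}^n \|v_i\|_u}\right) u .
\]
It then remains only to evaluate the constant. Since $\|\alpha u\|_u = \alpha$ for any $\alpha \ge 0$ by the definition of the Jauge norm, we get $\|v_i\|_u = \tfrac14 (b_i - a_i)^2$, hence $2\sum_{i=1}^n \|v_i\|_u = \tfrac12 \sum_{i=1}^n (b_i - a_i)^2$, and therefore $\dfrac{t^2}{2\sum_{i=1}^n \|v_i\|_u} = \dfrac{2t^2}{\sum_{i=1}^n (b_i - a_i)^2}$. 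Substituting this into the displayed bound gives exactly the claimed inequality.

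I do not anticipate a genuinely hard step here: the whole argument is a composition of two already-established results plus a one-line scalar computation. The only place that warrants a moment's care is checking that the conclusion of the bounded-element lemma matches the subGaussian definition verbatim — i.e.\ that the factor $\tfrac18$ in the lemma rewrites as $\tfrac12 v_i$ with $v_i$ \emph{invertible} — which is precisely why the hypothesis $a_i \ne b_i$ is needed, so that the subGaussian Hoeffding theorem is applicable.
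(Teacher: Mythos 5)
Your proposal is correct and is exactly the argument the paper intends (the corollary is stated without an explicit proof, precisely because it follows by combining the bounded-element lemma, read as subGaussianity with parameter $v_i=\tfrac{(b_i-a_i)^2}{4}u$, with the subGaussian Hoeffding theorem). Your observations that $a_i\neq b_i$ gives invertibility of $v_i$ and that $\|v_i\|_u=\tfrac{(b_i-a_i)^2}{4}$ yield the stated constant are the only checks needed, and they are right.
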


\bibliographystyle{plain}
\bibliography{biblio}

\end{document}